\documentclass[a4,12pt]{article}
\usepackage{amssymb,latexsym,amsmath}
\usepackage{graphicx}
\usepackage{graphics}
\usepackage{caption}
\usepackage{subcaption}
\usepackage{tikz}
\usepackage[numbers,sort&compress]{natbib}
\columnsep 0.1in \setlength{\hoffset}{-2.3cm} \setlength{\textwidth}{17cm} \setlength{\textheight}{21cm} \setlength{\topmargin}{-1cm}
\newtheorem{theorem}{Theorem}
\numberwithin{theorem}{section}
\newtheorem{lemma}[theorem]{Lemma}
\newtheorem{proposition}[theorem]{Proposition}
\newtheorem{corollary}[theorem]{Corollary}
\newtheorem{definition}[theorem]{Definition}
\newtheorem{example}[theorem]{Example}
\newtheorem{remark}[theorem]{Remark}


\def\endproof{\ifSuppressEndOfProof\global\SuppressEndOfProoffalse
\else\xqed\fi\endfollowon}

\def\endfollowon{\endtrivlist}

\def\pushright#1{{\parfillskip=0pt\widowpenalty=10000
\displaywidowpenalty=10000\finalhyphendemerits=0\leavevmode\unskip \nobreak\hfil\penalty50\hskip.2em\null\hfill{#1}\par}}

\newif\ifSuppressEndOfProof\SuppressEndOfProoffalse
\def\xqed{\pushright\markendofproof}
\def\markendofproof{\rule{1.4ex}{1.4ex}}

\begin{document}
\title{ Common properties of some function rings on a topological space }
\author{M.R. Ahmadi Zand\thanks{Department of pure Mathematics, School of Mathematics , Yazd University, Yazd, Iran\newline{E-mail address:
\it mahmadi@yazduni.ac.ir}}}  
\maketitle
\begin{abstract}
 For a nonempty topological space X, the ring of all real-valued functions on $X$ with pointwise addition and multiplication  is denoted by $F(X)$ and continuous members of $F(X)$ is denoted by $C(X)$. Let $A(X)$ be a subring of $F(X)$ and $B$ be a non-zero and nonempty subset of $A(X)$. Then we show that there are a subset $S$ of $X$ and a ring homomorphism $\phi:A(X)\to A(S)$ such that $ker \phi =Ann(B)$. 
 A lattice ordered subring $A(X)$ of $F(X)$    is called $P$-convex if every prime ideal of $A(X)$ is  an absolutely convex ideal in $A(X)$. Some properties of $P$-convex subrings of $F(X)$ are investigated. We show that the ring of Baire one functions on $X$ is  $P$-convex. A  proper ideal $I$ in $A(X)$ is called a  pseudofixed ideal if  $\bigcap \overline{Z[I]}\neq \emptyset $, where $ \overline{Z[I]}=\{cl_X f^{-1}(0) | f\in I\}$. Some characterizations of pseudofixed ideals in some subrings of $F(X)$ are given. Let $X$ be a completely regular Hausdorff space and let  $A(X)$ be a subring of $F(X)$ such that 
 $f \in A(X) $  is a unit of $A(X) $ if and only if  $ f^{-1}(0)=\emptyset$ and $C(X) \subseteq F(X)$. Then we show that $A(X)$ is a Gelfand ring and 
 $X$ is compact if and only if every proper ideal of $A(X)$ is pseudofixed.
  \end{abstract}
\vspace{0.5 cm}
\noindent { AMS Classification: 26A21,  54C40; 13C99.} \\
\textbf{Keywords}: Function rings; pseudofixed ideal;
   $z$-ideal; compact spaces; $P$-convex subring; Baire one functions; absolutely convex ideal; over-rings of $C(X)$, $\mathcal{P}$-filter, essential ideal, Gelfand ring.
\section{Introduction}
Let $X$  be   nonempty topological  space,  the set of all functions from $X$ to $\mathbb{R}$ is denoted by $F(X)$, the  continuous members of $F(X)$ is denoted by $C(X)$ and  collection of all pointwise limit functions of sequnces in $C(X)$ called the ring  of Baire class one functions or Baire one functions  is denoted by $B_{1}(X)$ \citep{Deb Ray}. 
  The ring
of all constant function on $X$  is denoted by  $\mathbb{R}(X)$  and if $r\in \mathbb{R}$, then ${\bf r}$ denotes a member of  $\mathbb{R}(X)$ defined by ${\bf r}(x)=r$ for any $x\in X$. Let $S$ be a subset of $X$.
The characteristic function of $S$ is denoted by $\chi_S$.
   As usual $cl_X A=\overline{A}$ and $int_X A=A^\circ$ will denote the closure and interior of a subset $A$ of $X$, respectively. 
   Let  $f\in F(X)$, then $Z(f)$ ($C(f)$) denotes the set $\{x\in X |f(x)=0\}$ ($\{x\in X:~f$ is continuous at $x \}$)  and  $Z(f)$ is  called a zero-set.  The ring
of all $f\in F(X)$   such that $X\setminus C(f)$ is a finite set is denoted by $C(X)_F $ \cite{GGT}. Clearly, $ C(X)_F $ is a subring of $F(X)$ and it is an over-ring of $C(X)$ for any topological space $X$ \cite{ZandKHos}. 

 We note that $F(X)=C(Y)$, where the topological space  $Y$ is the set $X$ with the  discrete topology. Thus by  \cite{Gillman},  the following equalities  hold for all $n,m\in \mathbb{N}$ and $a,b \in  F(X)$.
 \begin{itemize}
\item[(i)] $Z(b)=Z(|b|)=Z(b^n)$;
\item[(ii)] $Z(a^{m}b^{n})=Z(a)\cup Z(b)$;
\item[(iii)] $Z(n)=\emptyset$ and $Z(0)=X$;
\item[(iv)]  $Z(|a|^{m}+|b|^{n})=Z(a^{2n}+b^{2m})=Z(a)\cap Z(b)$. 
\end{itemize}  

Obviously, the above properties are true in any subring of $F(X)$ with different notation  (for  example see   \cite{Ahmadi Zand, AHkh,ZandKHos, ZandKHos2,Deb Ray,  GGT,Gillman}).  All rings are assumed to be commutative and reduced. A non-zero ideal $E$
in a commutative ring $R$ is called essential if it intersects every non-zero ideal non- trivially. For any $r\in R$, the intersection of all maximal
ideals  in $R$ containing $r$ is denoted by $M_r$,  and an ideal $I$ in $R$ is called a $z$-ideal if  $M_r \subseteq I$
for all  $r\in I$, see \cite[4A]{Gillman}. We note that a  $z$-ideal in $B_1(X)$ is denoted  by $Z_B$-ideal in \cite{Deb Ray, Deb Ray2}. For each subset $S$ of  $R$, the annihilator of $S$ is denoted by $Ann(S)$.\\
In this paper, we consider three properties of C(X) and with respect to each of these properties subrings of $F(X)$ are classified and studied. We also provide examples to illustrate the results
presented herein.  For undefined notations, the reader is referred to \cite{Engelking} and
\cite{Gillman}. 

\subsection{Definition and preliminaries}
The following theorem which is a generalization of \cite[Theorem 2.2]{Deb Ray} is well known but for the sake of completeness, we give a proof here.
\begin{theorem}\label{1.1}
Let $X$ be any topological space and $f\in B_1(X)$. Then, $f$ is a unit of $B_1(X)$ if and only if $Z(f)= \emptyset $.
\end{theorem}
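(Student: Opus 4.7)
The statement has a routine direction and a substantive direction, so I would handle them separately.

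For the forward implication, I would argue directly from the definition of a unit. If $f$ is a unit of $B_1(X)$, there exists $g\in B_1(X)$ with $fg=\mathbf{1}$, so for each $x\in X$ we have $f(x)g(x)=1$, which forces $f(x)\neq 0$. Hence $Z(f)=\emptyset$. No topology on $X$ enters here.

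For the converse, the only thing to verify is that $1/f$ actually lies in $B_1(X)$; the fact that it is a pointwise inverse is automatic from $Z(f)=\emptyset$. Since $f\in B_1(X)$, pick a sequence $\{f_n\}\subseteq C(X)$ with $f_n\to f$ pointwise on $X$. The naive candidate $1/f_n$ is unusable because each $f_n$ may well vanish somewhere, so I would instead smooth it away from $0$ by defining
\[
g_n(x)\;=\;\frac{f_n(x)}{f_n(x)^2+\tfrac{1}{n^2}}\qquad(x\in X,\ n\in\mathbb{N}).
\]
Each denominator is strictly positive and continuous, so $g_n\in C(X)$. The convergence $g_n\to 1/f$ pointwise is then the key computation: fix $x\in X$; since $f(x)\neq 0$, the continuity of $t\mapsto t/(t^2+s)$ jointly in $(t,s)$ at $(f(x),0)$ gives $g_n(x)\to f(x)/f(x)^2=1/f(x)$. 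Thus $1/f$ is a pointwise limit of a sequence in $C(X)$, hence belongs to $B_1(X)$, and it serves as the multiplicative inverse of $f$ inside $B_1(X)$.

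The main point of substance, and really the only nontrivial step, is finding the regularization $g_n$: one cannot simply compose with $t\mapsto 1/t$ because that map is not continuous at $0$, and the $f_n$ are not guaranteed to stay away from $0$ even though $f$ does. The term $1/n^2$ in the denominator circumvents both issues at once, keeping $g_n$ continuous for every $n$ while becoming negligible in the limit wherever $f(x)\neq 0$. No further properties of $X$ (separation axioms, completeness, etc.) are needed, which matches the hypothesis that $X$ is an arbitrary topological space.
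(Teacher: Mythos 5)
Your proof is correct, and the converse direction takes a genuinely different (and more self-contained) route than the paper. The paper does not construct anything: it observes that $Z(f)=\emptyset$ gives $f^2>0$ and then invokes \cite[Theorem 2.3]{Deb Ray}, which supplies a $g\in B_1(X)$ with $gf^2=1$, so that $gf$ is the inverse of $f$. You instead build the approximating sequence for $1/f$ explicitly, regularizing with $g_n=f_n/(f_n^2+\tfrac{1}{n^2})$ so that each term stays continuous even where $f_n$ vanishes, and the perturbation washes out in the limit because $f$ never vanishes. Your construction is essentially the standard proof of the cited external result (specialized through $f$ rather than $f^2$), so what you gain is a proof with no dependence on the Deb Ray paper and a visible mechanism; what the paper's version gains is brevity and reuse of an already-established lemma. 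The forward direction is identical in both (the paper dismisses it as straightforward, and your pointwise argument is exactly why). No gaps.
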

\begin{proof}
If $f$ is a unit of $B_1(X)$, then it is straightforward that $Z(f)=\emptyset$. Conversely, if $Z(f)=\emptyset$, then $f^2 >0$ and so by \cite[Theorem 2.3]{Deb Ray} there exists $g\in B_1(X)$ such that $gf^2=1$. Thus $f$ is a unit of $B_1(X)$.
\end{proof}
 Let $X$ be a topological space and $A(X)$ be a subring of $F(X)$ determined by some special property or properties, for example $A(X)$ can be equal to $\mathbb{R}(X)$, ~$C(X)$, $C(X)_F$ or $B_1(X)$. Bounded members of $A(X)$  is denoted by $A^*(X)$, clearly $A^*(X)$ is a subring of $A(X)$. If $f\in F(X)$ and $S\subseteq X$, then  $f_{|_{S}}\in F(S)$ denotes the restriction of $f$ to $S$.\\
   Recall that if $ \mathcal{P} $ is a collection of subsets of a topological space $ X $ which is closed under finite    unions and intersections,  then a  $\mathcal{P}$-filter on X is a collection $\mathcal{F}$ of  subsets of $X$ with the following properties. \begin{center} $\emptyset \notin \mathcal{F}$, if $P_1, P_2 \in \mathcal{F}$, then 
$ P_1 \cap P_2\in \mathcal{F} $ and  $ P_1 \in \mathcal{F}, P_1 \subset P_2 \in \mathcal{P}  $ implies  $P_2\in \mathcal{F}$. \end{center}
If $A(X)$ is a subring of $F(X)$  then 
the
family $\{Z(f): f\in A(X) \}$    denoted by
$Z_{A(X)}$ is closed under finite unions and intersections. In \cite{Gillman}, $ Z_{C(X)}=Z_{C^*(X)} $ is denoted by $Z(X)$,  in \cite{Deb Ray}, $ Z_{B_1(X)} $  is denoted by $ Z(B_1(X)) $ and in \cite{GGT}, $ Z_{C(X)_F}$ is  denoted by $\mathcal{Z}[C(X)_F] $ or $\mathcal{Z}(X)$.  We note that $Z_{C(X)}$-filters and $z$-filters \cite{Gillman} are the same. In this paper, a $\mathcal{Z}$-filter \cite{GGT} is called a $Z_{C(X)_F}$-filter and a $Z_B$-filter \citep{Deb Ray2} is called a $ Z_{B_1(X)} $-filter.\\
 Generally, if $A(X)$ is a subring of $F(X)$ we can consider $Z_{A(X)}$-filters and if $\mathcal{F}$ is a $Z_{A(X)}$-filter, then  we denote $\{ f \in  A : Z(f) \in \mathcal{F} \}$ by $Z^{-1}[\mathcal{F}]$ and if $I$ is an ideal in  $A(X)$, then we denote 
$\{Z(f) : f \in  I \}$  by $Z[I]$.
\section{Fixed ideals in  subrings of $F(X)$}
 Recently, a fixed  (resp., free) ideal was defined and studied in some especial over-rings of $C(X)$ for example see \cite{Deb Ray2}, \cite{GGT}. Similar to these definitions we can define a fixed  (resp., free) ideal in an arbitrary subring of $F(X)$.
 \begin{definition}
Let $A (X)$ be a subring of $F(X)$. An ideal $I$  in $A (X)$ is called a {\it fixed} ideal  if $\bigcap Z[I] $ is nonempty and  $I$ is called a {\it free} ideal if $\bigcap Z[I]=\emptyset$.   
\end{definition}
Clearly, every free ideal in  a subring of $F(X)$ is not contained in any fixed ideal.
\begin{proposition}\label{compact}
Let $I$ be a proper free ideal in a subring $A(X)$  of $F(X)$, $\mathbb{R}(X) \subseteq A(X)$  and $p\in X$. Then there exists a non-zero ideal $J$ contained in $I$ such that $p\in \bigcap Z[J]$ or $X$ is a disjoint union of two proper zero-sets in $Z_{A(X)}$. 
\end{proposition}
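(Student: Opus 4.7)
The natural candidate to try is
\[
J := \{\, g \in I : g(p) = 0 \,\},
\]
and my plan is to show that either $J$ itself witnesses the first alternative, or else its triviality forces the second. The first step is routine: $J$ is closed under subtraction, and for any $a \in A(X)$ and $g \in J$ the product $ag$ lies in $I$ and satisfies $(ag)(p) = a(p)\cdot 0 = 0$, so $ag \in J$. Thus $J$ is an ideal of $A(X)$ contained in $I$, and by construction $p \in Z(g)$ for every $g \in J$, so $p \in \bigcap Z[J]$. Consequently, if $J \neq \{0\}$ we are done.

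Suppose then $J = \{0\}$. Since $I$ is free, $p \notin \bigcap Z[I]$, so there exists $f \in I$ with $c := f(p) \neq 0$. The crucial use of the hypothesis $\mathbb{R}(X) \subseteq A(X)$ is that I may form $a := f - {\bf c} \in A(X)$; since $a(p) = 0$, the element $af$ lies in $I$ and vanishes at $p$, hence $af \in J = \{0\}$. Expanding, $f^2 = cf$, equivalently $f(x)\bigl(f(x) - c\bigr) = 0$ for every $x \in X$, so $f$ assumes only the values $0$ and $c$ on $X$.

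The final step is to read off the decomposition. Set $Z_1 := Z(f)$ and $Z_2 := Z(f - {\bf c})$, both members of $Z_{A(X)}$. Because $f$ is $\{0,c\}$-valued and $c \neq 0$, the sets $Z_1$ and $Z_2$ are disjoint with $Z_1 \cup Z_2 = X$. The point $p$ lies in $Z_2$ but not $Z_1$, so $Z_2 \neq \emptyset$ and $Z_1 \neq X$; if $Z_1$ were empty then $f = {\bf c}$ would be a unit of $A(X)$ (since $\tfrac{1}{c}{\bf 1} \in \mathbb{R}(X) \subseteq A(X)$), forcing $I = A(X)$ and contradicting properness of $I$, and symmetrically $Z_2 \neq X$ because $f \not\equiv c$. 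Both $Z_1$ and $Z_2$ are therefore proper, and the second alternative holds. I expect the only non-mechanical step to be the identity $f^2 = cf$; the key point is that the presence of constants in $A(X)$ is precisely what turns the obstruction ``nothing in $I$ vanishes at $p$'' into a clopen-style partition of $X$ by zero-sets.
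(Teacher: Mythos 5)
Your proof is correct and follows essentially the same route as the paper: the key step in both is to take $f\in I$ with $f(p)=c\neq 0$, observe that $f(f-{\bf c})\in I$ vanishes at $p$, and note that its vanishing identically forces $f$ to be $\{0,c\}$-valued, yielding the partition $X=Z(f)\sqcup Z(f-{\bf c})$. The only (cosmetic) difference is your choice of $J$ as $\{g\in I: g(p)=0\}$ rather than the principal ideal generated by $f(f-{\bf c})$, and you are somewhat more explicit about why both zero-sets are proper.
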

\begin{proof}
Since $I$ is free, there is $f\in I$ such that $r=f(p)\neq 0$ and  so $g=f(f-{\bf r}) \in I$. The ideal generated by $g$ in $A(X)$ is denoted by $J$.  If $g\neq 0$, then  $J$ is a nonzero ideal containted in $I$ and $p\in \bigcap Z[J]$. Otherewise, $ Z(f-{\bf r})$ is a proper subset of $X$ since $I$ is a proper ideal. It is clear that $X$ is a disjoint union of  $Z(f)$ and $ Z(f-{\bf r})$ which completes the proof. 
 \end{proof}

  \begin{theorem}
 Let $A(X)$ be a subring  of $F(X)$ and $ B$ be a non-zero and nonempty subset of $A(X)$. Then there are a subset $S$ of $X$ and a ring homomorphism $\phi:A(X)\to A(S)$ such that $ker \phi =Ann(B)$ is a fixed ideal. 
 \end{theorem}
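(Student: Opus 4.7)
The plan is to take $S$ to be the ``support of $B$'', that is, the set on which at least one member of $B$ is nonzero, and let $\phi$ be the restriction map. Explicitly, I would define
\[
S \;=\; X\setminus \bigcap_{g\in B} Z(g) \;=\; \{x\in X : g(x)\neq 0 \text{ for some } g\in B\},
\]
and set $\phi(f) = f\!\restriction_S$. The codomain $A(S)$ is taken to be $\phi(A(X))$, a subring of $F(S)$. That $\phi$ is a ring homomorphism is immediate from the pointwise definitions of addition and multiplication in $F(X)$ and $F(S)$.

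Next I would verify the kernel identification $\ker\phi = \mathrm{Ann}(B)$ by a double inclusion. For $(\subseteq)$: if $f\!\restriction_S = 0$, then for every $g\in B$ and every $x\in X$, either $g(x)=0$ (so $f(x)g(x)=0$) or $x\in S$ (so $f(x)=0$, hence $f(x)g(x)=0$); thus $fg=0$ for all $g\in B$. For $(\supseteq)$: if $fg=0$ for every $g\in B$, then for each $x\in S$ one can pick $g\in B$ with $g(x)\neq 0$, and $f(x)g(x)=0$ forces $f(x)=0$; so $\phi(f)=0$. This also confirms $\mathrm{Ann}(B)$ is an ideal, as kernels of ring homomorphisms are.

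Finally, to show $\mathrm{Ann}(B)$ is fixed, I would split into two cases depending on whether $X\setminus S = \bigcap_{g\in B} Z(g)$ is empty or not. If it is nonempty, then every $f\in\mathrm{Ann}(B)$ vanishes on $X\setminus S$ by the argument above, so $X\setminus S \subseteq Z(f)$ for each such $f$, whence
\[
\bigcap Z[\mathrm{Ann}(B)] \;\supseteq\; X\setminus S \;\neq\; \emptyset.
\]
If instead $\bigcap_{g\in B}Z(g)=\emptyset$, then $S=X$, so $\phi$ is injective and $\mathrm{Ann}(B)=\{0\}$; in that case $Z[\mathrm{Ann}(B)]=\{Z(0)\}=\{X\}$, and $\bigcap Z[\mathrm{Ann}(B)] = X\neq\emptyset$ since $X$ is assumed nonempty.

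I do not foresee a real obstacle: the whole argument is essentially bookkeeping once the correct set $S$ is identified. The only subtle point is the degenerate case $\mathrm{Ann}(B)=\{0\}$, which one must handle separately in order for the ``fixed'' conclusion to make literal sense under the definition given; the hypothesis that $X$ is nonempty is used there, as is the convention $Z(0)=X$ recorded in (iii) of the preliminaries.
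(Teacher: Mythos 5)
Your construction is exactly the paper's: the same set $S=\bigcup_{g\in B}(X\setminus Z(g))=X\setminus\bigcap_{g\in B}Z(g)$, the same restriction homomorphism, and the same double-inclusion computation of the kernel. The kernel identification is fine. However, the final step, where you argue that $\mathrm{Ann}(B)$ is fixed, contains a genuine error: you claim that every $f\in\mathrm{Ann}(B)$ vanishes on $X\setminus S$ ``by the argument above,'' but the argument above shows the opposite containment --- it shows $f$ vanishes on $S$ (at each $x\in S$ you pick $g\in B$ with $g(x)\neq 0$ and conclude $f(x)=0$). On $X\setminus S$ every $g\in B$ already vanishes, so $fg=0$ places no constraint on $f$ there. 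The claimed inclusion $\bigcap Z[\mathrm{Ann}(B)]\supseteq X\setminus S$ is false in general: take $X=\{1,2\}$, $A(X)=F(X)$, $B=\{\chi_{\{1\}}\}$; then $S=\{1\}$, $\chi_{\{2\}}\in\mathrm{Ann}(B)$, and $Z(\chi_{\{2\}})=\{1\}$ does not contain $X\setminus S=\{2\}$.

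The fix is short and makes your case split unnecessary: since $\mathrm{Ann}(B)=\ker\phi$ consists of functions vanishing on $S$, you have $S\subseteq Z(f)$ for every $f\in\mathrm{Ann}(B)$, hence $\bigcap Z[\mathrm{Ann}(B)]\supseteq S$, and $S\neq\emptyset$ because $B$ contains a non-zero function (so some $X\setminus Z(g)$ is nonempty). Note that your write-up never uses the hypothesis that $B$ is non-zero --- that is the tell that something has gone wrong, since that hypothesis is precisely what guarantees $S\neq\emptyset$ and hence fixedness.
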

   \begin{proof}
If $S=\bigcup_{b\in B}(X\setminus Z(b))$, then a function $\phi:A(X)\to A(S)$ defined by $\phi(f)=f_{|_S}$ is a ring homomorphism. Clearly, $ker \phi$ is fixed and $f\in Ann(B)$ implies that  for any $b\in B$, $bf=0$ or $X\setminus Z(b)\subseteq Z(f)$. Thus $f_{|_S}=0$, i.e., $f\in ker \phi$, and so $ Ann(B) \subseteq ker \phi$.  Conversely, if $f\in ker \phi$, i.e.,  $f_{|_S}=0$, then  for any $b\in B$,  we have $f_{|_{X\setminus Z(b)}}=0$ so  $fb=0$, i.e., $f\in Ann(B)$. Therefore, $Ker \phi \subseteq Ann(B)$ which completes the proof.
 \end{proof}
 \begin{theorem}
 Let $A,B, S$ be  nonempty subsets of a completely regular Hausdorff space $X$, $A(X)$ be a subring of $F(X)$, $C(X)\subseteq A(X)$ and   $L_S=\{f\in A(X)| \overline{S}\subseteq Z(f)\}$. Then the following statements hold.
 \begin{itemize}
 \item[{\normalfont(1)}]
  $L_S$ is a fixed ideal in $A(X)$.
 \item[{\normalfont(2)}]
 $\overline{A}=\overline{B}$ if and only if $L_A=L_B$.
 \item[{\normalfont(3)}]
 $L_B=\{0\}$ if and only if $B$ is a dense subset of $X$.
 \end{itemize}
 \end{theorem}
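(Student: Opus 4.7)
For (1), the plan is to verify the ideal axioms directly from the defining condition $\overline{S}\subseteq Z(f)$, and then exhibit an explicit common zero. If $f,g\in L_S$ then $Z(f)\cap Z(g)\subseteq Z(f-g)$ gives $f-g\in L_S$, and for any $h\in A(X)$ the trivial inclusion $Z(f)\subseteq Z(hf)$ gives $hf\in L_S$. For fixedness, note that by definition $\overline{S}\subseteq Z(f)$ for every $f\in L_S$, so $\emptyset\neq\overline{S}\subseteq\bigcap Z[L_S]$, since $S$ is nonempty.

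For (2), the forward implication is immediate from the definition: $L_S$ depends only on $\overline{S}$. For the converse I plan to argue by contrapositive, and this is where the hypothesis $C(X)\subseteq A(X)$ and complete regularity of $X$ are essential. If $\overline{A}\neq\overline{B}$, pick without loss of generality a point $x\in\overline{A}\setminus\overline{B}$. Because $X$ is Tychonoff and $\overline{B}$ is closed missing $x$, there is a continuous $f:X\to\mathbb{R}$ with $f(x)=1$ and $f\equiv 0$ on $\overline{B}$; viewing $f$ as an element of $A(X)$, we have $f\in L_B$ but $x\in\overline{A}\setminus Z(f)$ forces $\overline{A}\not\subseteq Z(f)$, so $f\notin L_A$, contradicting $L_A=L_B$.

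For (3), the easy direction: if $B$ is dense then $\overline{B}=X$, and $\overline{B}\subseteq Z(f)$ forces $f=0$. The other direction is again by contrapositive via complete regularity: if $B$ is not dense, take $x\in X\setminus\overline{B}$ and use Tychonoff separation to produce a continuous $f$ with $f(x)=1$ and $f|_{\overline{B}}=0$; this $f$ is a nonzero element of $L_B$.

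The only subtle point across the three items is the use of complete regularity to manufacture continuous functions separating a point from a closed set, which is what makes the reverse implications in (2) and (3) go through; everything else is a direct unpacking of the definitions. I would present (2) before (3) since (3) is essentially the special case of (2) where one of the sets is $X$, but handling them as stated keeps the exposition parallel.
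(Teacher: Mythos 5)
Your proof is correct and follows essentially the same route as the paper: the forward direction of (2) from the fact that $L_S$ depends only on $\overline{S}$, and the reverse implications in (2) and (3) by using complete regularity to separate a point of $\overline{A}\setminus\overline{B}$ (resp.\ $X\setminus\overline{B}$) from the closed set $\overline{B}$ with a continuous function lying in $L_B$ but not $L_A$. Your explicit verification of the ideal axioms and of fixedness in (1) merely fills in what the paper dismisses as straightforward.
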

 \begin{proof} (1) It is straightforward. \\
 (2) Let  $\overline{A}=\overline{B}$. Then $L_A=L_{\overline{A}}=L_{\overline{B}}=L_B$. Conversely, if $\overline{A}\neq \overline{B}$, then without loss of generality we can assume that there is $p$ in $\overline{A} \setminus \overline{B}$. By  complete regularity of $X$, there is     $f_{p}\in C(X)\subseteq A(X)$ such that $p\notin Z(f)$ and $\overline{B}\subseteq Z(f_{p})$.
 Thus,         $f_{p}\in L_B\setminus  L_A$.       \\
 (3)  If $B$ is not dense in $X$, then there exists $p\in X\setminus \overline{B}$. As we have shown above, this means that  there exists a non-zero   $   f_{p}$ in   $ L_B$, i.e., $L_B\neq \{0\}$. The converse is obvious.                                                                          
 \end{proof}
\begin{lemma}\label{27kh}
Let $X$ be a topological space, $A(X)$ be a subring of $F(X)$ and $\mathbb{R}(X) \subseteq A(X)$. Then for any $p\in X$, $M_p^{A(X)}=\{f\in A(X)| f(p)=0\}$ is a fixed maximal ideal in $A(X)$.
\end{lemma}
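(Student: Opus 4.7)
The plan is to produce a surjective ring homomorphism $A(X)\to\mathbb{R}$ whose kernel is exactly $M_p^{A(X)}$, and then use the First Isomorphism Theorem to conclude maximality, while verifying that $p\in\bigcap Z[M_p^{A(X)}]$ to conclude that the ideal is fixed.

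First I would define the evaluation map $\phi_p:A(X)\to\mathbb{R}$ by $\phi_p(f)=f(p)$. A routine check using pointwise addition and multiplication in $F(X)$ shows this is a ring homomorphism, and by definition its kernel is $M_p^{A(X)}$, so in particular $M_p^{A(X)}$ is an ideal. Next I would use the hypothesis $\mathbb{R}(X)\subseteq A(X)$: for any $r\in\mathbb{R}$ we have $\mathbf{r}\in A(X)$ with $\phi_p(\mathbf{r})=r$, so $\phi_p$ is surjective. By the First Isomorphism Theorem, $A(X)/M_p^{A(X)}\cong\mathbb{R}$, and since $\mathbb{R}$ is a field, $M_p^{A(X)}$ is a maximal ideal of $A(X)$.

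To show that $M_p^{A(X)}$ is a fixed ideal, observe that for every $f\in M_p^{A(X)}$ we have $f(p)=0$, i.e., $p\in Z(f)$. Hence $p\in\bigcap Z[M_p^{A(X)}]$, which is therefore nonempty. By the definition given earlier, this is exactly the statement that $M_p^{A(X)}$ is fixed.

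There is no genuine obstacle here; the only subtlety is that the surjectivity of $\phi_p$ (and hence the fact that the quotient is a field, not merely an integral domain embedded in $\mathbb{R}$) relies essentially on the hypothesis $\mathbb{R}(X)\subseteq A(X)$, so I would make sure to invoke it explicitly when producing the preimages of real numbers.
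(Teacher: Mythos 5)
Your proposal is correct and follows essentially the same route as the paper: evaluation at $p$ gives a surjective ring homomorphism onto $\mathbb{R}$ (surjectivity coming from $\mathbb{R}(X)\subseteq A(X)$), whose kernel is $M_p^{A(X)}$, hence maximal; the paper leaves the fixedness claim implicit, which you spell out correctly via $p\in\bigcap Z[M_p^{A(X)}]$.
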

\begin{proof}
For any $p\in X$, a function $\phi: A(X) \to \mathbb{R}$ defined by $\phi(f)=f(p)$ is a ring homomorphism and since $A(X)$ contains all  of the constant functions, $\phi$ is onto. Clearly, $ker (\phi)=M_p^{A(X)}$ and so it is a maximal ideal  in $A(X)$.
\end{proof}
\begin{corollary}\label{khorob10tir}
Let $X$ be a topological space, $A(X)$ be a subring of $F(X)$ and $\mathbb{R}(X) \subseteq A(X)$.  Then  the intersection of all maximal ideal in $A(X)$ is zero, i.e., $A(X)$ is semi-simple.
\end{corollary}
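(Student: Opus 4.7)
The plan is to deduce this immediately from the previous lemma. By Lemma \ref{27kh}, for every point $p\in X$ the set $M_p^{A(X)}=\{f\in A(X)\mid f(p)=0\}$ is a (fixed) maximal ideal of $A(X)$. Hence the intersection of all maximal ideals of $A(X)$ is contained in $\bigcap_{p\in X} M_p^{A(X)}$.

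Next I would unpack what it means for an element to lie in this latter intersection: a function $f\in A(X)$ belongs to $\bigcap_{p\in X} M_p^{A(X)}$ if and only if $f(p)=0$ for every $p\in X$, i.e.\ $f$ is the zero function. Therefore the Jacobson radical of $A(X)$ is contained in $\{0\}$, and since it obviously contains $0$, it equals $\{0\}$, which is the definition of semi-simplicity.

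There is no real obstacle here: the hypothesis $\mathbb{R}(X)\subseteq A(X)$ is exactly what is needed to apply Lemma \ref{27kh} and produce enough fixed maximal ideals to separate points from the zero function. The proof is essentially a one-line consequence, so I would present it as such, writing only the chain $\bigcap_{M}M\subseteq \bigcap_{p\in X} M_p^{A(X)}=\{0\}$.
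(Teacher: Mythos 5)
Your proof is correct and is exactly the intended derivation: the paper states this as an immediate corollary of Lemma \ref{27kh} without further argument, and the chain $\bigcap_M M\subseteq\bigcap_{p\in X}M_p^{A(X)}=\{0\}$ is precisely that one-line consequence.
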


 \begin{proposition}\label{5aya} 
 Let $X$ be a completely regular Hausdorff space. If $A(X)$ and $B(X)$ are two subrings of $F(X)$ and two over-rings of $C(X)$, then there is a one-to-one correspondence between collections of all fixed maximal ideals in $A(X)$ and $B(X)$.
\end{proposition}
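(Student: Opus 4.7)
The plan is to parametrize the fixed maximal ideals of any over-ring of $C(X)$ by the points of $X$. More precisely, I would show that for any subring $A(X)$ of $F(X)$ with $C(X)\subseteq A(X)$, the assignment $p\mapsto M_p^{A(X)}$ coming from Lemma \ref{27kh} is a bijection from $X$ onto the collection of all fixed maximal ideals of $A(X)$. Once this is done for both $A(X)$ and $B(X)$, the desired one-to-one correspondence follows by composing: $M_p^{A(X)}\leftrightarrow p\leftrightarrow M_p^{B(X)}$.

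For injectivity, I would invoke complete regularity of the Hausdorff space $X$. If $p\neq q$, then $\{q\}$ is closed and there exists $h\in C(X)\subseteq A(X)$ with $h(p)=0$ and $h(q)=1$; thus $h\in M_p^{A(X)}\setminus M_q^{A(X)}$, so the two ideals are distinct. For surjectivity, let $M$ be any fixed maximal ideal of $A(X)$. By definition of \emph{fixed}, there exists some $p\in\bigcap Z[M]$, meaning every $f\in M$ vanishes at $p$; hence $M\subseteq M_p^{A(X)}$. By Lemma \ref{27kh} the ideal $M_p^{A(X)}$ is maximal and in particular proper, so the maximality of $M$ forces $M=M_p^{A(X)}$. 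Applying the same argument to $B(X)$ yields a parallel bijection $p\mapsto M_p^{B(X)}$, and the required correspondence is the composition of one bijection with the inverse of the other.

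I do not expect any serious obstacle; the only delicate point is the injectivity of $p\mapsto M_p^{A(X)}$, which is exactly where both hypotheses of the proposition are used: complete regularity of the Hausdorff space $X$ provides the separating function, while $C(X)\subseteq A(X)$ guarantees that this separating function lies in $A(X)$. Everything else is a direct application of Lemma \ref{27kh} together with the definition of a fixed ideal.
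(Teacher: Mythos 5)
Your proof is correct and takes essentially the same route as the paper: both parametrize the fixed maximal ideals of an over-ring of $C(X)$ by the points of $X$ via $p\mapsto M_p^{A(X)}$ and compose the two resulting bijections. If anything, yours is slightly more complete, since you explicitly verify surjectivity (every fixed maximal ideal equals some $M_p^{A(X)}$ by maximality), a step the paper leaves implicit, while the paper obtains injectivity by intersecting with $C(X)$ and citing $M^p\neq M^q$ rather than by exhibiting the separating function directly.
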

\begin{proof}
For any $p\in X$,  $M_p^{A(X)}$ is a fixed maximal ideal in $A(X)$ by Lemma \ref{27kh}.  $M_p^{A(X)}\cap C(X)=M^p$ is a fixed maximal ideal in $C(X)$. If $p,q$ are two distinct points of $X$, then from $M^p \neq M^q$ \cite{Gillman} it follows that $M_p^{A(X)}\neq M_q^{A(X)}$ so there is a bijection between the collections of fixed maximal ideals in $A(X)$ and $C(X)$.  Similarly, there is a bijection  between  the collections of all fixed maximal ideals in $B(X)$ and $C(X)$ which completes the proof.
\end{proof}
\begin{proposition}
Let $X$ be a completely regular Hausdorff space, $A(X)$ be a subring of $F(X)$ and $C(X)\subseteq A(X)$. Then every  fixed maximal ideal in $A(X)$ is an essential ideal if and only if for any $p\in X$, $\chi_{\{p\}} \notin A(X)$.
\end{proposition}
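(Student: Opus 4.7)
The plan is to prove both directions by exploiting the fact that, since $C(X)\subseteq A(X)$, the fixed maximal ideals of $A(X)$ are exactly the ideals $M_p^{A(X)}=\{f\in A(X):f(p)=0\}$ for $p\in X$ (every $M_p^{A(X)}$ is a fixed maximal ideal by Lemma \ref{27kh}, and conversely any fixed maximal ideal $M$ has some common zero $p$, so $M\subseteq M_p^{A(X)}$, hence equality by maximality).

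For the forward direction, I will argue contrapositively. Suppose $\chi_{\{p\}}\in A(X)$ for some $p\in X$. Consider the principal ideal $J=(\chi_{\{p\}})$ in $A(X)$, which is non-zero because $\chi_{\{p\}}\neq 0$. Every element of $J$ has the form $g\,\chi_{\{p\}}$ with $g\in A(X)$ (since $A(X)$ contains the constant function $\mathbf{1}$), and therefore vanishes off $\{p\}$. If such an element also lies in $M_p^{A(X)}$, it vanishes at $p$ as well, hence is identically zero. Thus $J\cap M_p^{A(X)}=\{0\}$, contradicting the essentiality of $M_p^{A(X)}$.

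For the converse, assume $\chi_{\{p\}}\notin A(X)$ for every $p\in X$, fix an arbitrary $p\in X$, and let $J$ be any non-zero ideal in $A(X)$. Choose a non-zero $f\in J$. If there exists $x_0\neq p$ with $f(x_0)\neq 0$, then by complete regularity and the Hausdorff property (so $\{p\}$ is closed) there is $g\in C(X)\subseteq A(X)$ with $g(p)=0$ and $g(x_0)=1$. The product $fg$ lies in $J$ (as $J$ is an ideal), lies in $M_p^{A(X)}$ (since $(fg)(p)=0$), and is non-zero (since $(fg)(x_0)=f(x_0)\neq 0$). In the remaining case, $f(x)=0$ for all $x\neq p$ while $f(p)\neq 0$, so $f=f(p)\,\chi_{\{p\}}$, which forces $\chi_{\{p\}}=\tfrac{1}{f(p)}f\in A(X)$, contradicting our standing hypothesis. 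Thus $J\cap M_p^{A(X)}\neq\{0\}$, showing $M_p^{A(X)}$ is essential.

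The only delicate step is the case-split in the converse: producing a non-zero element of $J\cap M_p^{A(X)}$ breaks down precisely when a generator of $J$ is supported at the single point $p$, and it is exactly this scenario that the hypothesis $\chi_{\{p\}}\notin A(X)$ rules out. Apart from this point, the argument is just an application of complete regularity to manufacture a separating function in $C(X)$.
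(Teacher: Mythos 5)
Your proof is correct and takes essentially the same approach as the paper's: the forward direction exhibits the principal ideal generated by $e=\chi_{\{p\}}$ as a non-zero ideal meeting $M_p^{A(X)}$ trivially (the paper phrases this as $M_p^{A(X)}=(1-e)$), and the converse identifies fixed maximal ideals with the $M_p^{A(X)}$ and uses complete regularity to manufacture a non-zero element of $J\cap M_p^{A(X)}$, with the hypothesis $\chi_{\{p\}}\notin A(X)$ ruling out the only bad case. If anything, you spell out the separating-function step and the case analysis more explicitly than the paper, which only asserts that $X\setminus Z(f)\neq\{q\}$ and that $M$ therefore meets the ideal generated by $f$.
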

\begin{proof}
If  there is  $p\in X$ such that  $e=\chi_{\{p\}} \in A(X)$, then $M_p^{A(X)}$ is the ideal generated by $1-e$. Thus $M_p^{A(X)}$ is not an essential ideal. Conversely, let  for any $p\in X$, $\chi_{\{p\}} \notin A(X)$ . If $M$ is a  fixed maximal ideal in $A(X)$, then there exists $q\in X$ such that $M=M_q^{A(X)}$ by Lemma \ref{27kh}.  The equality $\bigcap Z[M]=\{q\}$ holds by proof of Proposition \ref{5aya}. Thus, if $f$ is a non-zero element of $A(X)$, then $X\setminus Z(f)\neq \{q\}$ and so $M$ intersects the ideal generated by $f$   in $A(X)$ which completes the proof.
\end{proof}

\section{When a unit   is equivalent to its zero-set is empty}
In this section, we study  a subring $A(X)$ of $F(X)$ such  that $f\in A(X)$ is a unit if and only if  $Z(f)=\emptyset$. A generalization of fixed ideals in an arbitrary subrings of  $F(X)$ are given and some characterizations of this kind of ideals  in some subrings of $F(X)$ are established. Also, we give a new algebraic characterization of compact spaces. \\
 Some subrings of $F(X)$ have the following property  (\ref{mahmood}) of  $C(X)$.
 Let a  subring $ A (X)$ of $ F(X) $  have the following property. 
\begin{equation}\label{mahmood}
f \in A (X) ~\textit{ is a unit of }~ A (X) ~\textit{ if and only if } ~ Z(f)=\emptyset.
\end{equation} 
\begin{remark}\label{kalb}
Let  $X$ be  a topological space.  Then by  {\normalfont Theorem  \ref{1.1}}, the condition  {\normalfont(\ref{mahmood})} holds for $A(X)=B_1(X)$. Also,  the condition  {\normalfont (\ref{mahmood})} holds for $A(X)= C(X)_F $   {\normalfont \cite[Lemma 2.4]{GGT}}. If $X$ is not  pseudocompact, then   the condition {\normalfont  (\ref{mahmood})} is not true for $A^*(X)$, where $C(X) \subseteq A(X)$ since it is not true for $C^*(X)$.
\end{remark}
The proof of the following result is similar to  \cite[Theorems 2.3 and 2.6]{Gillman} and \cite[Theorems 2.5, 2.6,2.8 and 2.9]{Deb Ray2} and so we state it without proof.
  \begin{proposition}\label{KHorshidKhordad}
Let the condition  {\normalfont (\ref{mahmood})} be true for a subring $A(X)$ of $F(X)$. Then the following statements hold.
\begin{itemize}
\item[{\normalfont(1)}]
for any proper ideal $I$ in   $A (X)$, $Z[I]$ is a $Z_{A (X)}$-filter and $ Z^{-1}[Z[I]]\supseteq I $.
\item[{\normalfont(2)}]
if $\mathcal{F}$ is a $Z_{A (X)}$-filter  on X, then $Z^{-1}[\mathcal{F}]$ is a proper $z$- ideal in $A(X)$.
\item[{\normalfont(3)}] 
Let $M$ be a maximal ideal in $A(X)$ and $Z(f)$ meet every element of $Z[M]$ for some $f\in A(X)$, then $f\in M$.
\end{itemize}
 \end{proposition}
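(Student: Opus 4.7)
The plan is to mirror the classical arguments from Gillman--Jerison for $C(X)$, invoking the zero-set identities (i)--(iv) from the introduction wherever those identities were used for continuous functions, and applying hypothesis (\ref{mahmood}) exactly in the places where the equivalence between ``unit'' and ``empty zero-set'' is needed.

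For (1), I would verify the three $Z_{A(X)}$-filter axioms for $Z[I]$. The empty set is excluded because any $f\in I$ with $Z(f)=\emptyset$ would, by (\ref{mahmood}), be a unit and force $I=A(X)$, contradicting properness. Closure under finite intersections comes from setting $h=f^{2}+g^{2}\in I$ for $f,g\in I$ and applying identity (iv) to obtain $Z(h)=Z(f)\cap Z(g)$. Superset closure within $Z_{A(X)}$ follows from identity (ii): if $f\in I$ and $Z(f)\subseteq Z(g)$ with $g\in A(X)$, then $fg\in I$ and $Z(fg)=Z(f)\cup Z(g)=Z(g)$. The inclusion $Z^{-1}[Z[I]]\supseteq I$ is immediate from the definitions.

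For (2), write $J=Z^{-1}[\mathcal{F}]$. Sum-closure uses $Z(f+g)\supseteq Z(f)\cap Z(g)$ together with the filter axioms; absorption under $A(X)$-multiplication uses $Z(hf)\supseteq Z(f)$; properness follows from $Z({\bf 1})=\emptyset\notin\mathcal{F}$. The delicate point is the $z$-ideal property: given $f\in J$ and $g\in M_{f}$, I would show $Z(f)\subseteq Z(g)$, whereupon $Z(g)\in\mathcal{F}$ by superset closure and so $g\in J$. The containment $M_{f}\subseteq\{g:Z(f)\subseteq Z(g)\}$ is obtained contrapositively: any $p\in Z(f)\setminus Z(g)$ would make the fixed maximal ideal $M_{p}^{A(X)}$ provided by Lemma~\ref{27kh} contain $f$ but not $g$, contradicting $g\in M_{f}$. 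This step is the main obstacle, since it requires an implicit appeal to $\mathbb{R}(X)\subseteq A(X)$, a hypothesis satisfied by the motivating examples $C(X)$, $C(X)_{F}$, $B_{1}(X)$.

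For (3), I would argue by contradiction: if $f\notin M$, maximality gives $m\in M$ and $a\in A(X)$ with $m+af={\bf 1}$, hence $Z(m+af)=\emptyset$ by (\ref{mahmood}). But $Z(m)\cap Z(f)\subseteq Z(m+af)$ would then be empty as well, contradicting the hypothesis that $Z(f)$ meets every element of $Z[M]$, in particular $Z(m)\in Z[M]$. Hence $f\in M$.
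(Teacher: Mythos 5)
Your proof is correct and is essentially the classical argument the paper itself points to: the paper states this proposition without proof, remarking only that it is proved like \cite[Theorems 2.3 and 2.6]{Gillman} and \cite[Theorems 2.5, 2.6, 2.8 and 2.9]{Deb Ray2}, and your three parts reproduce exactly those arguments (filter axioms via identities (ii) and (iv) plus condition (\ref{mahmood}) for properness; the Bezout-type contradiction for (3)). Your caveat on the $z$-ideal clause of (2) is well taken and worth recording: establishing $M_f\subseteq\{g\in A(X):Z(f)\subseteq Z(g)\}$ via the fixed maximal ideals $M_p^{A(X)}$ of Lemma~\ref{27kh} genuinely requires $\mathbb{R}(X)\subseteq A(X)$, a hypothesis absent from the statement here but which the paper quietly reinstates in Propositions~\ref{tankin} and~\ref{zideal}, precisely where this characterization of $M_f$ is invoked.
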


If $I$ is a proper free ideal in $A(X)$ such that the condition {\normalfont (\ref{mahmood})} holds for $A(X)$ and $f\in I$, then $f$ is not a unit of $A(X)$ and so $Z(f)\neq \emptyset $. Thus the  fixed ideal generated by $f$   is contained in $I$.  Since $Z[Z^{-1}[\mathcal{F}]]=\mathcal{F}$, every $Z_{A (X)}$-filter $ \mathcal{F} $ is of the form $Z[I]$ for some proper ideal $I$ in
$A (X)$.   In addition, let  $\mathbb{R}(X) \subseteq A(X)$ and $f\in A(X)$. Then a topological  characterization of the ideal  $M_f$ is given in the following result.
\begin{proposition}\label{tankin}
Let $A(X)$ be a subring of $F(X)$, the condition  {\normalfont (\ref{mahmood})} hold for it,  $\mathbb{R}(X) \subseteq A(X)$ and $f\in A(X)$. Then  $M_f=\{g\in A(X) | Z(f) \subseteq Z(g) \}$.
\end{proposition}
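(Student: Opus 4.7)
The plan is to prove the two inclusions separately, using Proposition \ref{KHorshidKhordad}(3) for one direction and Lemma \ref{27kh} (which gives fixed maximal ideals at points of $X$) for the other.

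For the inclusion $\{g \in A(X) \mid Z(f)\subseteq Z(g)\} \subseteq M_f$, I would fix such a $g$ and an arbitrary maximal ideal $M$ containing $f$, and show $g \in M$ via Proposition \ref{KHorshidKhordad}(3). The key observation is that $Z[M]$ is a $Z_{A(X)}$-filter by Proposition \ref{KHorshidKhordad}(1), hence closed under finite intersections and not containing $\emptyset$. Thus for each $h \in M$, $Z(h) \cap Z(f) \in Z[M]$ is nonempty, and since $Z(f) \subseteq Z(g)$ we get $\emptyset \neq Z(h) \cap Z(f) \subseteq Z(h) \cap Z(g)$. So $Z(g)$ meets every element of $Z[M]$, and Proposition \ref{KHorshidKhordad}(3) gives $g \in M$. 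Since $M$ was arbitrary, $g \in M_f$.

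For the reverse inclusion, I would argue the contrapositive. Suppose $Z(f) \not\subseteq Z(g)$ and pick $p \in Z(f) \setminus Z(g)$. Since $\mathbb{R}(X) \subseteq A(X)$, Lemma \ref{27kh} applies: $M_p^{A(X)} = \{h \in A(X) \mid h(p)=0\}$ is a maximal ideal in $A(X)$. We have $f(p)=0$, so $f \in M_p^{A(X)}$, but $g(p) \neq 0$, so $g \notin M_p^{A(X)}$. Hence $g$ fails to lie in this particular maximal ideal containing $f$, so $g \notin M_f$.

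I do not anticipate a real obstacle here; the only slightly delicate point is making sure the filter property of $Z[M]$ is used correctly in the first direction (specifically that $Z[M]$ is closed under finite intersection and omits the empty set, both of which follow immediately from $M$ being a proper ideal plus Proposition \ref{KHorshidKhordad}(1)). The second direction is essentially immediate from the existence of enough fixed maximal ideals, which is exactly the content of Lemma \ref{27kh}.
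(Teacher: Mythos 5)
Your proposal is correct and follows essentially the same two-inclusion strategy as the paper: the reverse inclusion via the fixed maximal ideal $M_p^{A(X)}$ from Lemma \ref{27kh} is identical, and the forward inclusion relies on the same Proposition \ref{KHorshidKhordad}. The only (cosmetic) difference is that you invoke part (3) of that proposition (the ``meets every member of $Z[M]$'' criterion), whereas the paper notes that $Z(g)\in Z[M]$ by the filter's closure under supersets and concludes $g\in Z^{-1}[Z[M]]=M$ by maximality; both routes are valid.
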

\begin{proof}
If $Z(g)$ does not contain $Z(f)$ for some $g\in A(X)$, then there is $p\in Z(f) \setminus Z(g)$. By Lemma \ref{27kh},    $M_p^{A(X)}$ is a fixed maximal ideal in $A(X)$ which contains $f$ but does not contain $g$, i.e.,  $g\notin M_f$. Conversely, let $Z(f)\subseteq Z(g)$ for some $g\in A(X)$. If $M$ is a maximal ideal in $A(X)$  containing $f$, then $Z(g) \in Z[M]$ and $Z[M]$ is a $Z_{A(X)}$-filter  by Proposition \ref{KHorshidKhordad}. Therefore, $ Z^{-1}[Z[M]]$ containing $g$ is a proper ideal in $A(X)$ by Proposition \ref{KHorshidKhordad} and so $g\in  Z^{-1}[Z[M]]=M$ which completes the proof.
\end{proof}

A topological characterization of a $z$-ideal in some subrings of $F(X)$ is given in the following result.
\begin{proposition}\label{zideal}
Let $A(X)$ be a subring of $F(X)$ such   that   the condition {\normalfont(\ref{mahmood})} holds for  it and $\mathbb{R} \subseteq A(X)$. Then for a proper ideal $I$ in $A(X)$,  the following conditions  are equivalent.
\begin{itemize}
 \item[{\normalfont (1)}]
$I$  is a $z$-ideal.
 \item[{\normalfont (2)}]
whenever $Z(f)\subseteq Z(g)$, $f\in I$ and $g\in A(X)$, then $f\in I$.
 \item[{\normalfont(3)}]
$ Z^{-1}[Z[I]]=I$.
\end{itemize}
\end{proposition}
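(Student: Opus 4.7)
The plan is to cycle through the three implications, leaning on the two propositions just proved: the description $M_f = \{g \in A(X) : Z(f) \subseteq Z(g)\}$ from Proposition \ref{tankin}, and the fact that $Z[I]$ is a $Z_{A(X)}$-filter with $Z^{-1}[Z[I]] \supseteq I$ from Proposition \ref{KHorshidKhordad}. (I read condition (2) as: whenever $Z(f) \subseteq Z(g)$ with $f \in I$ and $g \in A(X)$, then $g \in I$; the statement as printed must carry a typo since its literal version is trivial.)

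For (1)$\Rightarrow$(2), assume $I$ is a $z$-ideal, take $f \in I$ and $g \in A(X)$ with $Z(f) \subseteq Z(g)$. By Proposition \ref{tankin} this containment is exactly the statement $g \in M_f$, and the $z$-ideal hypothesis gives $M_f \subseteq I$, hence $g \in I$.

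For (2)$\Rightarrow$(3), the containment $I \subseteq Z^{-1}[Z[I]]$ is given by Proposition \ref{KHorshidKhordad}(1). For the reverse, pick any $g \in Z^{-1}[Z[I]]$; then $Z(g) = Z(f)$ for some $f \in I$, so in particular $Z(f) \subseteq Z(g)$, and (2) yields $g \in I$.

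For (3)$\Rightarrow$(1), take $f \in I$ and $g \in M_f$; by Proposition \ref{tankin} we have $Z(f) \subseteq Z(g)$. Since $Z[I]$ is a $Z_{A(X)}$-filter (Proposition \ref{KHorshidKhordad}(1)) and $Z(g) \in Z_{A(X)}$ is a superset of the member $Z(f)$, we conclude $Z(g) \in Z[I]$, so $g \in Z^{-1}[Z[I]] = I$. Hence $M_f \subseteq I$ for every $f \in I$, meaning $I$ is a $z$-ideal. The main obstacle is simply making sure one invokes the filter property of $Z[I]$ in this last step; everything else is a routine unpacking of the preceding results.
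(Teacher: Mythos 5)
Your proof is correct and follows essentially the same route as the paper, which merely sketches the argument (``(2) and (3) are clearly equivalent; (1) and (2) are equivalent as in Proposition~\ref{tankin}''): you fill in exactly those details, using Proposition~\ref{tankin} to identify $M_f$ and the filter property from Proposition~\ref{KHorshidKhordad} where needed. Your reading of (2) as concluding $g\in I$ is the right correction of an evident typo in the statement.
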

\begin{proof}
Clearly, (2) and (3) are equivalent.  Similar to the proof of  Proposition  \ref{tankin} and \cite[4A]{Gillman} (1) and (2) are equivalent.
\end{proof}
The following two theorems can be proved similar to  \cite[Theorems 2.16 and 2.17]{Deb Ray2} and so we state them without proof.
\begin{theorem}\label{3deyvan}
Let $A(X)$ be a subring of $F(X)$ such   that   the condition {\normalfont (\ref{mahmood})} holds for $A(X)$. For any proper $z$-ideal $I$ in $A(X)$, the following conditions are equivalent.
\begin{itemize}
\item[{\normalfont (1)}]
$I$ is a prime ideal in $A(X)$.
\item[{\normalfont (2)}]
A prime ideal in $A(X)$ is contained in $I$.
\item[{\normalfont (3)}]
From $f,g\in A(X)$ and $fg=0$ it follows that $f\in I$ or $g\in I$.
\item[{\normalfont (4)}]
For all $f\in A(X)$, there is a zero-set of $I$ on which $f$ does not change sign.
\end{itemize}
\end{theorem}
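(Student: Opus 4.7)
The plan is to establish the cyclic chain $(1) \Rightarrow (2) \Rightarrow (3) \Rightarrow (4) \Rightarrow (1)$, following the template of \cite[Theorems 2.16 and 2.17]{Deb Ray2}. The two easy implications are essentially tautological: $(1) \Rightarrow (2)$ because $I$ is itself a prime ideal sitting inside $I$, and $(2) \Rightarrow (3)$ because if a prime $P \subseteq I$ and $fg = 0 \in P$, then one of $f, g$ lies in $P$ and hence in $I$.

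For $(3) \Rightarrow (4)$, I would fix $f \in A(X)$ and split it into its positive and negative parts $f^{+} = f \vee 0$ and $f^{-} = (-f) \vee 0$, exploiting the lattice structure carried by the motivating examples $B_{1}(X)$ and $C(X)_{F}$. The identity $f^{+} \cdot f^{-} = 0$ together with hypothesis (3) forces at least one of these to belong to $I$; the corresponding zero-set $Z(f^{+}) = \{x : f(x) \le 0\}$ or $Z(f^{-}) = \{x : f(x) \ge 0\}$ then lies in $Z[I]$ and $f$ does not change sign on it.

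The heart of the argument is $(4) \Rightarrow (1)$, which I expect to be the main obstacle. Suppose $fg \in I$; I must show $f \in I$ or $g \in I$. The key move is to apply (4) not to $f$ or $g$ directly but to the auxiliary element $f^{2} - g^{2} \in A(X)$: this yields $h \in I$ such that $f^{2} - g^{2}$ has constant sign on $Z(h)$, so up to swapping the roles of $f$ and $g$ we may assume $|f| \ge |g|$ on $Z(h)$. Since $h, fg \in I$, the element $k = h^{2} + (fg)^{2}$ also lies in $I$, and by identity (iv) of the zero-set calculus recorded in the introduction, $Z(k) = Z(h) \cap Z(fg)$. On $Z(k)$ we have both $|f| \ge |g|$ and $fg = 0$, whence $g^{2} \le |f|\,|g| = |fg| = 0$, so that $Z(k) \subseteq Z(g)$. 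Because $I$ is a $z$-ideal, Proposition \ref{zideal} now delivers $g \in I$, closing the cycle.

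The subtle part of this last step is the construction of the auxiliary element $k$: one needs a member of $I$ whose zero-set is simultaneously large enough to encode the sign information provided by (4) and small enough to sit inside $Z(g)$. The device $k = h^{2} + (fg)^{2}$, which packages the hypothesis $fg \in I$ together with the witness $h$ from (4), achieves exactly this, after which the $z$-ideal characterisation from Proposition \ref{zideal} finishes the job.
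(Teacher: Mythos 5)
Your proof is correct and follows exactly the route the paper itself points to: the paper omits the proof, deferring to the $B_1(X)$ argument of Deb Ray--Mondal, and your cycle $(1)\Rightarrow(2)\Rightarrow(3)\Rightarrow(4)\Rightarrow(1)$ --- with $f^{+}f^{-}=0$ driving $(3)\Rightarrow(4)$, and the sign witness for $f^{2}-g^{2}$ combined with $Z(h^{2}+(fg)^{2})=Z(h)\cap Z(fg)\subseteq Z(g)$ and the $z$-ideal property driving $(4)\Rightarrow(1)$ --- is precisely that argument. The only friction is one you already flagged: the theorem's literal hypothesis is condition (\ref{mahmood}) alone, which does not guarantee $f^{+},f^{-}\in A(X)$, so $(3)\Rightarrow(4)$ genuinely requires the lattice-ordered setting (e.g.\ condition (\ref{29ordybehesht}) together with $\mathbb{R}(X)\subseteq A(X)$, as in Lemma \ref{sarab}); this is a defect of the statement as printed rather than of your proof, since the paper's intended ``similar'' proof needs the same extra hypothesis. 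The remaining steps, including the direction of Proposition \ref{zideal} you invoke in $(4)\Rightarrow(1)$, do go through under condition (\ref{mahmood}) as you use them.
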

\begin{theorem}\label{etir}
Let $A(X)$ be a subring of $F(X)$ such   that   the condition {\normalfont  (\ref{mahmood})} holds for $A(X)$. Then, every prime ideal in $A(X)$ can be extended to a unique maximal ideal in $A(X)$, i.e.,   $A(X)$ is a    Gelfand ring.
\end{theorem}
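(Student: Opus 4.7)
The plan is to adapt the classical Gillman--Jerison argument for $C(X)$: given a prime ideal $P$, pass to the $Z_{A(X)}$-filter $Z[P]$, show it is a prime filter, extend it uniquely to a $Z_{A(X)}$-ultrafilter, and translate back to a maximal ideal via the correspondence between $Z_{A(X)}$-ultrafilters and maximal ideals afforded by condition {\normalfont(\ref{mahmood})}.

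First, I would show that every maximal ideal $M$ of $A(X)$ is a $z$-ideal. By Proposition \ref{KHorshidKhordad}(1),(2) the set $Z^{-1}[Z[M]]$ is a proper $z$-ideal containing $M$; maximality forces $M = Z^{-1}[Z[M]]$, and Proposition \ref{zideal} then tells us $M$ is a $z$-ideal. Combined with Proposition \ref{KHorshidKhordad}, this gives that $\mathcal{U}\mapsto Z^{-1}[\mathcal{U}]$ is a bijection between $Z_{A(X)}$-ultrafilters and maximal ideals of $A(X)$. Existence of at least one maximal ideal containing $P$ is automatic from Krull's theorem, so the content lies in uniqueness.

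Next, I would pass from $P$ to its $z$-closure $P^* := Z^{-1}[Z[P]]$, which is a proper $z$-ideal containing $P$ by Proposition \ref{KHorshidKhordad}. Since the prime $P$ sits inside the proper $z$-ideal $P^*$, Theorem \ref{3deyvan}((2)$\Rightarrow$(1)) forces $P^*$ itself to be prime. Every maximal ideal containing $P$ is a $z$-ideal and therefore contains $P^*$, so uniqueness for $P$ reduces to uniqueness for $P^*$. Primality of $P^*$ in turn lets me show that $Z[P]$ is a prime $Z_{A(X)}$-filter: if $Z(f_1)\cup Z(f_2) = Z(f_1 f_2)\in Z[P]$, then $f_1 f_2\in P^*$, so $f_1\in P^*$ or $f_2\in P^*$, whence $Z(f_1)\in Z[P]$ or $Z(f_2)\in Z[P]$.

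Finally, I would invoke the standard lattice-theoretic fact that, on the distributive lattice $Z_{A(X)}$, every prime filter is contained in a unique $Z_{A(X)}$-ultrafilter. Combining this with the bijection above produces the unique maximal ideal $Z^{-1}[\mathcal{U}]$ above $P$, proving $A(X)$ is Gelfand. The main obstacle is the prime-filter-extends-uniquely step; the decisive input there is condition {\normalfont(\ref{mahmood})}, which, by forcing $f^2+g^2$ to be a unit whenever $Z(f)\cap Z(g)=\emptyset$, lets one carry the $C(X)$-style zero-set manipulations underlying \cite[Theorems 2.16, 2.17]{Deb Ray2} verbatim into the general subring $A(X)$.
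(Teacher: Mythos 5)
Your reduction to the zero-set level is fine as far as it goes (maximal ideals are $z$-ideals, $P^{*}=Z^{-1}[Z[P]]$ is a prime $z$-ideal containing $P$, every maximal ideal above $P$ contains $P^{*}$, and $Z[P]$ is a prime $Z_{A(X)}$-filter), but the final step is where the whole proof actually lives, and it is not correct as stated. It is \emph{not} a general fact about distributive lattices that a prime filter is contained in a unique maximal filter: in the five-element distributive lattice $\{0,a,b,a\vee b,1\}$ with $a\wedge b=0$ and $a\vee b<1$, the filter $\{1\}$ is prime yet lies below the two distinct maximal filters generated by $a$ and by $b$. What makes the argument work for $Z(X)$ in \cite[2.13]{Gillman} is a normality property of that particular lattice: disjoint zero-sets are completely separated, i.e., contained in zero-sets $Z_1,Z_2$ with $Z_1\cup Z_2=X$, $Z_1\cap Z(g)=\emptyset$ and $Z_2\cap Z(f)=\emptyset$. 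Your appeal to condition (\ref{mahmood}) does not deliver this: knowing $f^2+g^2$ is a unit gives $h=f^2(f^2+g^2)^{-1}\in A(X)$ with $h+k=1$, but the separating sets $\{x:h(x)\le \frac{1}{2}\}$ and $\{x:h(x)\ge \frac{1}{2}\}$ are zero-sets of $(h-\frac{1}{2})\vee 0$ and $(\frac{1}{2}-h)\vee 0$, and condition (\ref{mahmood}) alone does not put those functions in $A(X)$ --- that needs closure under composition with continuous functions (condition (\ref{29ordybehesht})) or at least under absolute value. So the ``verbatim'' transfer you invoke at the decisive point is exactly the step that is unjustified.

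The irony is that your step 2 already contains everything needed for a correct finish, and that finish is the one the paper intends (it refers to \cite[Theorem 2.17]{Deb Ray2}, which follows \cite[2.11]{Gillman}): if $M_1\neq M_2$ are maximal ideals containing the prime $P$, they are $z$-ideals by your step 1, so $M_1\cap M_2$ is a $z$-ideal containing a prime ideal and hence is itself prime by Theorem \ref{3deyvan}((2)$\Rightarrow$(1)); but choosing $f\in M_1\setminus M_2$ and $g\in M_2\setminus M_1$ gives $fg\in M_1\cap M_2$ with neither factor in $M_1\cap M_2$, a contradiction. This bypasses ultrafilters and the normality issue entirely, pushing all the difficulty into Theorem \ref{3deyvan}, which the paper also imports from \cite{Deb Ray2}. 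Replace your step 4 with this intersection argument and the proof closes.
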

It is well known that if the intersection of all maximal ideals in a comutative   Gelfand ring is zero, then every prime ideal is either an
essential ideal or a maximal ideal generated by an idempotent \cite{samei}.  Thus, we have the following result.
\begin{corollary}
Let $A(X)$ be a subring of $F(X)$ such   that   the condition {\normalfont (\ref{mahmood})} holds for $A(X)$ and $\mathbb{R}(X) \subseteq A(X)$. Then every prime ideal in $A(X)$ is either  an essential  ideal or a  maximal fixed ideal which is at the same time a minimal prime generated by an idempotent.
\end{corollary}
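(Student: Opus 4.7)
\medskip

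\noindent\emph{Proof proposal.} The plan is to combine three ingredients already established (or cited) in the excerpt: the Gelfand property of $A(X)$ from Theorem \ref{etir}, the semi-simplicity from Corollary \ref{khorob10tir}, and the cited dichotomy from \cite{samei} for commutative Gelfand rings with zero Jacobson radical. Applying the last fact directly tells us that every prime ideal $P$ of $A(X)$ is either essential or is a maximal ideal of the form $(e)$ for some nontrivial idempotent $e\in A(X)$. So the only work remaining is, in the second case, to verify that $P=(e)$ is \emph{fixed} and is at the same time a \emph{minimal} prime.

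For fixedness, I would note that since $P=(e)$ is a \emph{proper} ideal, $e$ is not a unit of $A(X)$, so by hypothesis (\ref{mahmood}) we have $Z(e)\neq\emptyset$. Any $g\in(e)$ factors as $g=he$ for some $h\in A(X)$, hence $Z(g)\supseteq Z(e)$. Consequently
\[
\emptyset\neq Z(e)\subseteq \bigcap Z[(e)],
\]
so $P=(e)$ is a fixed ideal. (In particular, since it is a fixed maximal ideal, the $\mathbb{R}(X)\subseteq A(X)$ hypothesis guarantees via Lemma \ref{27kh} that $P=M_p^{A(X)}$ for some $p\in Z(e)$, which is a pleasant consistency check but not strictly needed for the statement.)

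For minimality of the prime $P=(e)$, I would invoke the standard characterization valid in any commutative reduced ring: a prime $P$ is minimal iff for every $x\in P$ there exists $s\notin P$ with $xs=0$. All rings here are assumed reduced, so this applies. Take $s:=1-e$. Then $s\notin P$, because $1-e\in(e)$ would force $1=e+(1-e)\in (e)$, contradicting properness. On the other hand, every $x\in (e)$ has the form $x=he$, so $x(1-e)=he(1-e)=0$ by idempotence of $e$. Thus the single witness $s=1-e$ works for all $x\in P$, proving that $P$ is a minimal prime.

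The main potential obstacle is not really mathematical but bookkeeping: one has to make sure the cited ring-theoretic dichotomy from \cite{samei} is applied to the correct class of rings (commutative reduced Gelfand with zero Jacobson radical), and that the hypotheses of our corollary supply each of these. Commutativity and reducedness are blanket assumptions of the paper; the Gelfand condition comes from Theorem \ref{etir} (which uses only (\ref{mahmood})); and the Jacobson radical is zero by Corollary \ref{khorob10tir} (which uses $\mathbb{R}(X)\subseteq A(X)$). Once these are lined up, the two cases in the conclusion fall out as above.
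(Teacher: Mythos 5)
Your proposal is correct and follows exactly the route the paper takes: it combines the Gelfand property (Theorem 3.6), semi-simplicity (Corollary 2.7), and the cited dichotomy from Samei's paper, which is precisely what the paper's one-line proof invokes. The only difference is that you also write out the verifications that the idempotent-generated maximal ideal is fixed and is a minimal prime --- details the paper leaves as ``trivial'' --- and both of those checks are sound.
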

\begin{proof}
By Corollary \ref{khorob10tir}, Theorem \ref{etir} and the above comment, it is trivial.
\end{proof}

\begin{definition}
Let $A(X)$ be a subring of $F(X)$, $\mathcal{F}$ be a $Z_{A(X)}$-filter and $\overline{\mathcal{F}}=\{ \overline{F}| F\in \mathcal{F} \}.$ Then  a proper ideal $I$ in $A(X)$ is called a  pseudofixed ideal if  $\bigcap \overline{Z[I]}\neq \emptyset .$
\end{definition}
We note that if $\mathcal{F}$ is a $Z_{C(X)}$-filter or $Z_{C^*(X)}$-filter, then $\overline{\mathcal{F}}=\mathcal{F}$ and so the concepts pseudofixed ideals and fixed ideals are coincided in $C(X)$,  $C^*(X)$ or their subrings. \\
It is well known that a $T_1$-space $X$ is finite if and only if every proper ideal in $C(X)_F$ is  a fixed  ideal in $C(X)_F$ \cite[Theorem 3.2]{GGT}, but the following example shows that this is not true for a pseudofixed ideal in $C(X)_F$.
\begin{example}
Let $S=\{\chi_{(0, \frac{1}{n}) }| n\in \mathbb{N} \}$ and $I$ be the ideal generated by $S$ in $C(\mathbb{R})_F$. It is easily seen that $I$ is a pseudofixed  ideal  in $C(\mathbb{R})_F$ which is not a  fixed ideal in $C(\mathbb{R})_F$.
\end{example}
 Now we answer the question of when every proper ideal in a subring of $F(X)$    is pseudofixed. 
\begin{proposition}\label{9tir}
Let $X$ be a topological space and $A(X)$ be a subring of $F(X)$ such   that   the condition {\normalfont(\ref{mahmood})} is true for $A(X)$. Then the following statements hold.
\begin{itemize}
\item[{\normalfont(1)}]
If $X$ is compact, then every proper  ideal in $A(X)$ is pseudofixed.
\item[{\normalfont(2)}]
Let $X$ be a completely regular Hausdorff space and $C(X)\subseteq A(X)$. If every proper  ideal in $A(X)$ is pseudofixed, then $X$ is compact. 
\end{itemize}
\end{proposition}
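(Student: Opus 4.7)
For part (1), the plan is to reduce the claim to compactness of $X$ via the finite intersection property. First I would invoke Proposition \ref{KHorshidKhordad}(1), which ensures that for the proper ideal $I \subseteq A(X)$, the collection $Z[I]$ is a $Z_{A(X)}$-filter; in particular it is closed under finite intersections and does not contain $\emptyset$, so it has FIP. Taking closures preserves FIP, so $\overline{Z[I]} = \{\overline{Z(f)} : f \in I\}$ is a family of closed subsets of $X$ with FIP. Compactness of $X$ then forces $\bigcap \overline{Z[I]} \neq \emptyset$, which is exactly pseudofixedness of $I$.

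For part (2), the plan is the contrapositive: assuming $X$ is not compact, I would exhibit a proper ideal in $A(X)$ that fails to be pseudofixed. Since $X$ is a completely regular Hausdorff space that is not compact, the classical Stone--\v{C}ech theory gives a point $p \in \beta X \setminus X$, and the associated maximal ideal $M^p = \{f \in C(X) : p \in cl_{\beta X} Z(f)\}$ is a free maximal ideal of $C(X)$; in particular $\bigcap_{f \in M^p} Z(f) = \emptyset$ while the family $\{Z(f) : f \in M^p\}$ has the finite intersection property.

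Next I would let $I$ be the ideal generated by $M^p$ inside $A(X)$, and verify it is proper. Any element of $I$ has the form $g = \sum_{i=1}^n f_i h_i$ with $f_i \in M^p$ and $h_i \in A(X)$, so $Z(g) \supseteq \bigcap_{i=1}^n Z(f_i)$, and FIP of $Z[M^p]$ yields $Z(g) \neq \emptyset$. Hypothesis (\ref{mahmood}) then says $g$ is not a unit, so $I$ does not contain $1$, hence is proper. Applying the standing assumption that every proper ideal of $A(X)$ is pseudofixed gives $\bigcap_{f \in I} \overline{Z(f)} \neq \emptyset$, so a fortiori $\bigcap_{f \in M^p} \overline{Z(f)} \neq \emptyset$. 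Since $M^p \subseteq C(X)$, each $Z(f)$ here is already closed in $X$, so $\overline{Z(f)} = Z(f)$ and we get $\bigcap_{f \in M^p} Z(f) \neq \emptyset$, contradicting the freeness of $M^p$.

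The routine half is part (1); the only subtle step is in part (2), namely the properness of the ideal generated by $M^p$ in the possibly larger ring $A(X)$. That is where condition (\ref{mahmood}) does the real work: without it, elements of $C(X)$ that are non-units of $C(X)$ could conceivably become units when enlarged to $A(X)$, and the contradiction would collapse. The use of $C(X) \subseteq A(X)$ is also essential here, both to import the free ideal $M^p$ and to guarantee that the zero-sets produced are closed so that the closure operation in the definition of pseudofixed is harmless.
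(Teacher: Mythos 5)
Your proof is correct and follows essentially the same route as the paper: part (1) is identical (FIP of the $Z_{A(X)}$-filter $Z[I]$ plus compactness), and part (2) rests on the same key mechanism, namely lifting a proper ideal of $C(X)$ to a proper ideal of $A(X)$ via condition (\ref{mahmood}) and using the fact that zero-sets of continuous functions are already closed. The only cosmetic difference is that the paper applies this to an arbitrary proper ideal $J$ of $C(X)$, concludes $J$ is fixed, and cites the classical theorem that $X$ is compact iff every proper ideal of $C(X)$ is fixed, whereas you unwind that theorem contrapositively through a free maximal ideal $M^p$ coming from $\beta X$.
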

\begin{proof}
(1) Let $X$ be compact and $I$ be a proper ideal in $A(X)$. Then  $Z[I]$  is a $Z_{A(X)}$-filter by Proposition \ref{KHorshidKhordad} and so it has  the finite intersection property. Therefore $\overline{Z[I]}$ has the finite intersection property   and so by the compctness of $X$, $\bigcap \overline{Z[I]}\neq \emptyset$, i.e., $I$ is a pseudofixed ideal in $A(X)$. \\
(2) Let  every proper ideal in $A(X)$ be pseudofixed, $J$ be a proper ideal in $C(X)$ and $I$ be the ideal in $A(X)$ generated by $J$. Then $I$ is a proper ideal in $A(X)$ by the condition (\ref{mahmood}),  and so $I$ is  a pseudofixed ideal in $A(X)$. Thus, 
$$\emptyset \neq \bigcap \overline{Z[I]} \subseteq \bigcap \overline{Z[J]}=\bigcap Z[J],$$ which implies that $J$ is a fixed ideal in $C(X)$. Therefore by \cite[Theorem 4.11]{Gillman}, $X$ is compact.
\end{proof}
It is well known that a completely regular Hausdorf space $X$ is compact if and only if every proper ideal in $C(X)$ is fixed \cite{Gillman}. In the following proposition, we give some new characterizations  of this result.
\begin{proposition}
For a completely regular  Hausdorff space $X$ the following are equivalent.
\begin{itemize}
\item[{\normalfont(1)}]
Let  the condition {\normalfont (\ref{mahmood})} is true for a subring $A(X)$ of $F(X)$ and $C(X)\subseteq A(X)$ . Then every proper ideal in  $A(X)$   is  pseudofixed.
\item[{\normalfont(2)}]
There is a subring $A(X)$ of $F(X)$ such  that $C(X)$ is a subring of it,   the condition  {\normalfont (\ref{mahmood})} holds for $A(X)$ and every  proper ideal in  $A(X)$ is pseudofixed.
\item[{\normalfont(3)}]
$X$ is compact.
\item[{\normalfont(4)}]
Every proper ideal in $C(X)$ is fixed. 
\end{itemize} 
\end{proposition}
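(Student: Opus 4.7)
The plan is to close the equivalence as a cycle $(3)\Rightarrow(1)\Rightarrow(2)\Rightarrow(3)$, and to pick up $(4)$ separately via the classical Gillman theorem stated immediately before the proposition. The implication $(3)\Leftrightarrow(4)$ is quoted from \cite[Theorem 4.11]{Gillman} and needs no further argument, so the real content lies in linking $(3)$ to the pseudofixed conditions $(1)$ and $(2)$.

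For $(3)\Rightarrow(1)$, I would simply invoke Proposition \ref{9tir}(1): that result says precisely that compactness of $X$ forces every proper ideal of any subring $A(X)$ satisfying (\ref{mahmood}) to be pseudofixed, and $(1)$ is just the universally quantified restatement of this conclusion. For $(2)\Rightarrow(3)$, I would invoke Proposition \ref{9tir}(2): provided $C(X)\subseteq A(X)$, the pseudofixedness of every proper ideal of $A(X)$ already forces $X$ to be compact, which is exactly what $(2)$ asserts (with some particular witness $A(X)$).

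The only implication requiring a small additional remark is $(1)\Rightarrow(2)$. Here I would exhibit a concrete witness, namely $A(X)=C(X)$ itself. Since $X$ is completely regular and Hausdorff, $C(X)$ is a subring of $F(X)$ containing $C(X)$; and for any $f\in C(X)$ the reciprocal $1/f$ is continuous on $X$ precisely when $Z(f)=\emptyset$, so $f$ is a unit of $C(X)$ iff $Z(f)=\emptyset$, i.e.\ condition (\ref{mahmood}) holds for $C(X)$. Then $(1)$ applied to this particular subring yields that every proper ideal of $C(X)$ is pseudofixed, which is the existence statement $(2)$.

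There is no genuine obstacle here; the proposition is essentially a repackaging of Proposition \ref{9tir} together with the Gillman equivalence $(3)\Leftrightarrow(4)$. The only subtle step is checking that $C(X)$ itself serves as a witness for $(2)$ (so that $(1)$ is nonvacuous and implies $(2)$), which follows from the elementary observation on units of $C(X)$ above.
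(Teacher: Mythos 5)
Your proposal is correct and follows exactly the route the paper intends: the paper's proof is the one-line remark ``By Proposition \ref{9tir} and the above comment, it is straightforward,'' where the ``above comment'' is the Gillman equivalence $(3)\Leftrightarrow(4)$, and your cycle $(3)\Rightarrow(1)\Rightarrow(2)\Rightarrow(3)$ via Proposition \ref{9tir} together with the witness $A(X)=C(X)$ is precisely the intended unpacking of that remark.
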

\begin{proof}
By Proposition \ref{9tir} and the above comment, it is straightforward.
\end{proof}
For a Hausdorff space $X$ and subrings of $F(X)$ that the condition (\ref{mahmood}) is true for them, a topological charactrization of free ideals which are not pseudofixed are given in the following theorem.
\begin{theorem}\label{mozafar}
Let $X$ be a Hausdorff space and $A(X)$ be a subring of $F(X)$ such   that   the condition {\normalfont (\ref{mahmood})} holds for it. A proper ideal $I$ in $A(X)$ is not pseudofixed if and only if for any compact subset $Y$ of $X$ there exists $f\in I$ such that $\overline{Z(f)} \bigcap Y=\emptyset$.
\end{theorem}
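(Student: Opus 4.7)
The plan is to unwind both directions of the equivalence using the zero-set identities listed at the start of the paper together with the Hausdorff compactness machinery.

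For the easy ($\Leftarrow$) direction, I would use that $X$ is Hausdorff, so every singleton $\{y\}$ is compact. Applying the hypothesis to $Y=\{y\}$ produces, for each $y\in X$, some $f_y\in I$ with $y\notin\overline{Z(f_y)}$. Hence no point of $X$ lies in every member of $\overline{Z[I]}$, so $\bigcap\overline{Z[I]}=\emptyset$ and $I$ is not pseudofixed.

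For the ($\Rightarrow$) direction, suppose $\bigcap_{f\in I}\overline{Z(f)}=\emptyset$ and let $Y\subseteq X$ be compact. Then the family $\{\,Y\setminus\overline{Z(f)} : f\in I\,\}$ is a collection of relatively open subsets of $Y$ whose union is all of $Y$, because any $y\in Y$ fails to lie in $\overline{Z(f)}$ for at least one $f\in I$. Compactness of $Y$ yields finitely many $f_1,\dots,f_n\in I$ with $Y\subseteq\bigcup_{i=1}^{n}(Y\setminus\overline{Z(f_i)})$, i.e., $Y\cap\bigcap_{i=1}^{n}\overline{Z(f_i)}=\emptyset$. To collapse these finitely many functions into a single element of $I$, I would form $f:=f_1^{2}+\cdots+f_n^{2}$. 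Since $I$ is an ideal, each $f_i^{2}=f_i\cdot f_i$ lies in $I$, so $f\in I$. By identity (iv) at the start of the paper, $Z(f)=\bigcap_{i=1}^{n}Z(f_i)$, hence
\[
\overline{Z(f)}\;=\;\overline{\bigcap_{i=1}^{n}Z(f_i)}\;\subseteq\;\bigcap_{i=1}^{n}\overline{Z(f_i)},
\]
so $\overline{Z(f)}\cap Y=\emptyset$, as required.

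The only delicate step is the passage from the finite subcover to a single $f\in I$; this is precisely what property (iv) of $Z$ and the closure-of-intersection-inside-intersection-of-closures inclusion are designed to deliver. Nothing in the argument requires (\ref{mahmood}) explicitly — the hypothesis is already encoded in the environment of the theorem and is not needed for the two directions themselves, although it is what makes the notion nontrivially relevant to the subrings being studied.
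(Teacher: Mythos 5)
Your proof is correct and follows essentially the same route as the paper's: the forward direction covers the compact set $Y$ by the relatively open sets $Y\setminus\overline{Z(f)}$, extracts a finite subcover, and collapses the finitely many functions into the single element $\sum f_i^2\in I$ via identity (iv) and the inclusion $\overline{\bigcap Z(f_i)}\subseteq\bigcap\overline{Z(f_i)}$, while the converse specializes to singleton compact sets exactly as the paper does. Your closing observation that condition (\ref{mahmood}) is not actually invoked in either direction is also consistent with the paper's argument.
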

\begin{proof}
If $I$ is not pseudofixed, then for every $x\in X$ there is $f_x \in I$ such that $x\notin \overline{Z(f)}$. Given a compact subset $Y$ of $X$, by the compactness of $Y$ there exists a finite subset $F$ of $Y$ such that
 \begin{equation*}
Y\subseteq \bigcup_{y\in F}(X\setminus \overline{Z(f_y)}.
\end{equation*}
Thus by the properties of zero-sets,
\begin{equation*}
Y\subseteq X \setminus \bigcap_{y\in F} \overline{Z(f_y)}\subseteq X\setminus \overline{\bigcap_{y\in F} Z(f_y)}=X\setminus \overline{Z(\sum_{y\in F}f^2_y)}.
\end{equation*}
Therefore, $f=\sum_{y\in F}f^2_y \in I$ and $Y\bigcap \overline{Z(f)}=\emptyset$. Conversely, if for every compact subset $Y$ of $X$ there is $f_Y \in I$ such that $Y\bigcap \overline{Z(f_Y)}=\emptyset$, then for any $x\in X$ there is $f_{\{x\}}\in I$ such that $\{x\}\bigcap \overline{Z(f_{\{x\}})}=\emptyset$ and so $I$ is not  pseudofixed.
\end{proof}
A topological characterization of  pseudofixed ideals  in some subrings of $F(X)$ is given in the following result.
\begin{corollary}
Let $X$ be a Hausdorff space and $I$ be a proper ideal in an arbitrary subring $A(X)$ of $F(X)$ such   that   the condition {\normalfont (\ref{mahmood})} holds for $A(X)$. Then $I$ is a  pseudofixed ideal in $A(X)$ if and only if there exists a compact subset $Y$ of $X$ such that $Y$ intersects the closure of every zero-set in $Z[I]$.
\end{corollary}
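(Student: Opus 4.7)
The corollary is essentially just the contrapositive reformulation of Theorem \ref{mozafar}, packaged as a topological characterisation of pseudofixed ideals. My plan is therefore to establish the two directions independently, deducing one from Theorem \ref{mozafar} and the other directly from the definition of pseudofixed.

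For the forward implication, suppose $I$ is pseudofixed, so $\bigcap \overline{Z[I]} \neq \emptyset$. Pick any $p \in \bigcap \overline{Z[I]}$ and set $Y = \{p\}$. Since a singleton in any topological space is compact, $Y$ is a compact subset of $X$, and by choice of $p$ we have $p \in \overline{Z(f)}$ for every $f \in I$, so $Y$ meets the closure of every zero-set in $Z[I]$.

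For the converse, I will appeal to Theorem \ref{mozafar}. Suppose there exists a compact subset $Y$ of $X$ such that $Y \cap \overline{Z(f)} \neq \emptyset$ for every $f \in I$. Then in particular it is \emph{not} the case that for every compact subset there is some $f \in I$ with $\overline{Z(f)} \cap Y = \emptyset$. Applying the contrapositive of Theorem \ref{mozafar}, we conclude that $I$ is pseudofixed.

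There is no real obstacle here, since the Hausdorff hypothesis and the condition (\ref{mahmood}) on $A(X)$ have already been absorbed into Theorem \ref{mozafar}; we only need compactness of singletons, which is automatic. Essentially the only nontrivial content of the corollary is the observation that a pseudofixed ideal always admits a \emph{singleton} as the required compact witness, and that allowing more general compact witnesses does not enlarge the class of pseudofixed ideals.
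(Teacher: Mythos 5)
Your proof is correct and takes essentially the same route as the paper: the corollary is just the contrapositive of Theorem \ref{mozafar}, and your direct argument for the forward direction (a singleton in $\bigcap\overline{Z[I]}$ serves as the compact witness) is exactly the observation underlying the converse half of that theorem's proof.
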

\begin{corollary}
Let $X$ be a Hausdorff space, $A(X)$ be a subring of $F(X)$ such   that   the condition {\normalfont (\ref{mahmood})} holds for $A(X)$ and $Y$ be a compact subset of $X$. If $I$ is a proper ideal in $A(X)$ that is not pseudofixed, then the image of a function $\phi: I \to A(Y)$ defined by $\phi(f)=f_{|_Y}$ is a subring of $A(Y)$.
\end{corollary}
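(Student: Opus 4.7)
The plan is to apply Theorem \ref{mozafar} to extract an $f\in I$ whose zero-set is separated from $Y$, and then to verify the defining conditions for $\phi(I)$ to be a subring of $A(Y)$. Because $I$ is an ideal of $A(X)$ and restriction is both additive and multiplicative, the closure of $\phi(I)$ under sums, differences, and products is automatic: for any $f,g\in I$ we have $\phi(f)\pm\phi(g)=(f\pm g)|_{Y}$ and $\phi(f)\phi(g)=(fg)|_{Y}$, with $f\pm g,\,fg\in I$. Hence the real content is producing a multiplicative identity inside $\phi(I)$, i.e.\ showing $\mathbf{1}_{Y}\in\phi(I)$.

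I would begin by invoking Theorem \ref{mozafar}: since $I$ is not pseudofixed and $Y$ is compact, there exists $f\in I$ with $\overline{Z(f)}\cap Y=\emptyset$. In particular $Z(f)\cap Y=\emptyset$, so $\phi(f)=f|_{Y}$ has no zero on $Y$. The strategy is then to exhibit $g\in A(X)$ with $(fg)|_{Y}=\mathbf{1}_{Y}$; once this is done, $fg\in I$ (as $f\in I$), so $\mathbf{1}_{Y}=\phi(fg)\in\phi(I)$, completing the proof.

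To construct such a $g$, I would exploit the fact that $\overline{Z(f)}$ and the compact set $Y$ are disjoint, so $Y$ lies inside the open set $U=X\setminus\overline{Z(f)}$ on which $f$ is nowhere zero; by condition (\ref{mahmood}) applied locally, $1/f$ makes sense on $U$, and one extends $1/f$ arbitrarily (say by $0$) outside $U$, using the closure condition on $Z(f)$ to ensure the resulting function lies in $A(X)$. This then yields the desired $g$, and hence $\mathbf{1}_{Y}\in\phi(I)$.

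The main obstacle is precisely this construction of $g$: while condition (\ref{mahmood}) together with $\overline{Z(f)}\cap Y=\emptyset$ gives the right pointwise formula for an inverse of $f$ on a neighbourhood of $Y$, showing that a global extension belongs to $A(X)$ requires the closedness of $\overline{Z(f)}$ (so that the glue happens on an \emph{open} piece containing $Y$) and the hypothesis that $A(X)$ satisfies (\ref{mahmood}), which gives enough closure of $A(X)$ under inversion away from zero sets. Once this single element $g$ is produced, the statement follows formally from the ideal property of $I$ and the homomorphism property of restriction.
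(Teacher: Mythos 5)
Your routine steps (closure of $\phi[I]$ under sums and products, and the invocation of Theorem \ref{mozafar} to produce $f\in I$ with $\overline{Z(f)}\cap Y=\emptyset$) match the paper. The divergence, and the gap, is in how you produce the identity. The paper does \emph{not} build a global inverse in $A(X)$: it simply observes that $Z(f_{|_Y})=\emptyset$ and applies condition (\ref{mahmood}) to the ring $A(Y)$, concluding that $f_{|_Y}$ is a unit of $A(Y)$ and hence that $\mathbf{1}\in\phi[I]$. Your route instead tries to manufacture $g\in A(X)$ equal to $1/f$ on $U=X\setminus\overline{Z(f)}$ and $0$ elsewhere, and this step does not work. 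Condition (\ref{mahmood}) is a statement about \emph{global} units of $A(X)$ (an element is invertible iff its zero-set is empty); it gives no ``local inversion'' on an open set and no licence to glue a piecewise-defined function back into $A(X)$, which is an arbitrary subring of $F(X)$ with no closure under such constructions. Concretely, already for $A(X)=C(X)$ with $X=\mathbb{R}$ your $g$ fails to be continuous: if $Z(f)$ is a nonempty proper closed set with nonempty boundary, then $1/f$ is unbounded near $\partial Z(f)$, so the function that is $1/f$ off $\overline{Z(f)}$ and $0$ on it is not in $C(X)$. Since your entire argument for $\mathbf{1}_Y\in\phi[I]$ funnels through the existence of this $g$, the proof as written is incomplete.

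It is worth noting that the step you were trying to avoid --- passing from ``$f_{|_Y}$ is a unit of $A(Y)$'' to ``$\mathbf{1}\in\phi[I]$'' --- is exactly where the paper's own proof is terse (the inverse of $f_{|_Y}$ lives in $A(Y)$, not a priori in the image of $A(X)$ under restriction), so your instinct that something must be supplied here is sound. But the fix is to argue inside $A(Y)$ using condition (\ref{mahmood}) on $Y$, not to extend $1/f$ over all of $X$.
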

\begin{proof}
If $J=\phi[I]$, then  $0\in J$ and if $f,g\in I$, then $f_{|_{Y}}g_{|_{Y}}=(fg)_{|_{Y}}\in J$ and $f_{|_{Y}}+g_{|_{Y}}=(f+g)_{|_{Y}} \in J$. By Theorem \ref{mozafar}, there is $f\in I$ such that  $\overline{Z(f)} \bigcap Y=\emptyset$. Thus, $Z(f_{|_{Y}})=\emptyset$ and the condition (\ref{mahmood}) implies that $f_{|_{Y}}$ is a unit of  $A(Y)$, i.e., ${\bf 1}\in J$ which completes the proof.
\end{proof}
\section{composition with a continuous function}
In this section,  we will prove that every prime ideal in $B_1(X)$ is  absolutely convex. we will study a subring $A(X)$ of $F(X)$ such that $g\in C(\mathbb{R})$ and $f\in A(X)$ imply $g\circ f \in A(X)$ and we will prove that $A(X)$ is lattice ordered ring. Also,  a contravariant functor of the category of all topological spaces and the continuous mappings into the category of commutative lattice ordered rings will be stablished. \\
Let $X$ be a topological space. Some subrings of $F(X)$ have the following property of rings of continuous real-valued functions on $X$.\\
 A subring $A(X)$ of $F(X)$ has the following property.
\begin{equation}\label{29ordybehesht}
\text{If}~ g:\mathbb{R}\to \mathbb{R} ~ \text{ is a  continuous function, then for any} ~ f\in A(X), \text{ we have}~ g\circ f \in A(X).
  \end{equation}
\begin{example}\label{Ex}
Let $X$ be an arbitrary topological space and $g\in C(\mathbb{R})$. If  $f\in C(X)_F$, then $C(f)\subseteq C(g\circ f)$ and so $g\circ f \in C(X)_F$. If $f\in B_1(X)$, then  $ g\circ f \in B_1(X)$ by {\normalfont \cite[Theorem 2.4]{Deb Ray}}. Thus the condition {\normalfont (\ref{29ordybehesht})} holds for subrings $C(X)_F$ and $B_1(X)$ of $F(X)$.
\end{example}
\begin{proposition}
Let  the condition {\normalfont (\ref{29ordybehesht})} hold for a subring $A(X)$ of $F(X)$. If $F$ is a $\mathbb{R}$ and $f\in A(X)$, then $f^{-1}(F) \in Z_{A(X)}$.
\end{proposition}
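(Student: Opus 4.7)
The statement contains an apparent typo: I read ``$F$ is a $\mathbb{R}$'' as ``$F$ is a closed subset of $\mathbb{R}$'', since otherwise the conclusion $f^{-1}(F) \in Z_{A(X)}$ (i.e., $f^{-1}(F) = Z(h)$ for some $h \in A(X)$) need not hold (zero-sets of functions in $A(X) \supseteq \mathbb{R}(X)$ are in particular preimages of a closed set under $f$).

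The plan is to reduce the problem to property (\ref{29ordybehesht}) by producing a single continuous function on $\mathbb{R}$ whose zero-set is exactly $F$. Concretely, I would define $g:\mathbb{R}\to\mathbb{R}$ by $g(x)=d(x,F)=\inf\{|x-y|:y\in F\}$. This function is $1$-Lipschitz, hence continuous, and because $F$ is closed in $\mathbb{R}$ one has $g^{-1}(0)=F$.

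Once $g$ is in hand the argument is a one-line composition. By hypothesis (\ref{29ordybehesht}), $h:=g\circ f$ lies in $A(X)$, and
\[
Z(h)=(g\circ f)^{-1}(0)=f^{-1}(g^{-1}(0))=f^{-1}(F).
\]
Therefore $f^{-1}(F)\in Z_{A(X)}$.

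There is really no serious obstacle here: the only thing to verify carefully is that $g(x)=d(x,F)$ is continuous and that its zero-set is precisely $F$, both of which are standard. The essential content of the proposition is simply that condition (\ref{29ordybehesht}) is strong enough to reproduce, within $A(X)$, the classical fact (for $C(X)$) that preimages of closed subsets of $\mathbb{R}$ under members of $A(X)$ are always realized as zero-sets in $A(X)$.
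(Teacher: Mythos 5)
Your proof is correct and takes essentially the same approach as the paper: the paper simply asserts the existence of some $g\in C(\mathbb{R})$ with $Z(g)=F$ and then composes, while you make that witness explicit as the distance function $d(\cdot,F)$; your reading of the typo as ``$F$ is a closed subset of $\mathbb{R}$'' matches the intended statement. The only micro-caveat is the degenerate case $F=\emptyset$, where $d(x,\emptyset)$ is not a real-valued function and one should take $g\equiv 1$ instead.
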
  
\begin{proof}
There exists $g \in C(\mathbb{R})$ such that $Z(g)=F$. Also, $g\circ f \in A(X)$   by hypothesis and  $Z(g\circ f)=f^{-1}(F)$ which completes the proof.
\end{proof}
\begin{remark}
Let $\mathbb{R}[x]$ denote the  polynomial ring with coefficients in $\mathbb{R}$. Then $\mathbb{R}[x]$ is a subring of $F(\mathbb{R})$ and we can consider $g(x)=\frac{1}{|x|^\frac{1}{3}+1}\in C(\mathbb{R})$ and  $f(x)=x+1 \in \mathbb{R}[x]$
which imply that $g\circ f \notin \mathbb{R}[x]$. Thus, the condition {\normalfont (\ref{29ordybehesht})} does not hold for the subring $\mathbb{R}[x]$.
\end{remark}
Let $S$ be a nonempty subset of $F(X)$ and $n\in \mathbb{N}$ be an odd number. For any $x\in X$, set 
$(f^\frac{1}{n})(x)=f(x)^\frac{1}{n}$. Thus $\{f^\frac{1}{n}|f\in S\}$ denoed by $S^\frac{1}{n}$ is a subset of $F(X)$ and this notation is used in the following proposition.                 
\begin{proposition}
Let  the condition  {\normalfont(\ref{29ordybehesht})} hold for a subring $A(X)$  of $F(X)$. If $P_*$ and $Q_*$  are two proper ideals in $A(X)$ such that $P^\frac{1}{3}_*Q^\frac{1}{5}_*$ is a prime ideal, then $P_*$ or $Q_*$ is prime in $A(X)$.
\end{proposition}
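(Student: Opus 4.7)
The plan is to identify the ideal $I := P^{1/3}_*Q^{1/5}_*$ with $\sqrt{P_*}\cap\sqrt{Q_*}$, then use primality of $I$ to pin down which radical it is, and finally upgrade that to primality of $P_*$ or $Q_*$.

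First I would show $I = \sqrt{P_*}\cap\sqrt{Q_*}$. Every generator $f^{1/3}g^{1/5}$ of $I$ satisfies $(f^{1/3}g^{1/5})^{3} = f\,g^{3/5}\in P_*$ and $(f^{1/3}g^{1/5})^{5} = f^{5/3}g\in Q_*$, using condition (\ref{29ordybehesht}) to guarantee that $g^{3/5}$ and $f^{5/3}$ lie in $A(X)$; so every generator of $I$ lies in both $\sqrt{P_*}$ and $\sqrt{Q_*}$, whence $I\subseteq \sqrt{P_*}\cap\sqrt{Q_*}$. Conversely, for $p\in P_*$ and $q\in Q_*$ the identity
\[
pq \;=\; (p^{1/3})^{3}(q^{1/5})^{5} \;=\; (p^{1/3}q^{1/5})\cdot p^{2/3}q^{4/5}
\]
exhibits $pq$ as an $A(X)$-multiple of a generator, so $P_*Q_*\subseteq I$; taking radicals and using that the prime ideal $I$ is already radical yields $\sqrt{P_*}\cap\sqrt{Q_*}=\sqrt{P_*Q_*}\subseteq I$. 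The standard prime-intersection argument, namely $\sqrt{P_*}\cdot\sqrt{Q_*}\subseteq\sqrt{P_*}\cap\sqrt{Q_*}=I$ with $I$ prime, then forces $\sqrt{P_*}\subseteq I$ or $\sqrt{Q_*}\subseteq I$; together with the reverse inclusions already established, one of $\sqrt{P_*}$, $\sqrt{Q_*}$ equals $I$ and is therefore a prime ideal.

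The main obstacle is the last step: upgrading primality of $\sqrt{P_*}$, say, to primality of $P_*$ itself. In a general commutative ring this implication fails (e.g.\ $(4)\subset\mathbb{Z}$ has prime radical but is not prime), so a genuine use of the structure of $A(X)$ is required. My plan is to exploit condition (\ref{29ordybehesht}) once more: given $ab\in P_*$, primality of $\sqrt{P_*}$ yields (after relabelling) $a^{n}\in P_*$ for some odd $n$, and the continuous functional calculus lets one write $a=(a^{n})^{1/n}\in A(X)$ and then manipulate $a$ back into $P_*$ through the product representation $I = P^{1/3}_*Q^{1/5}_*$. The delicate bookkeeping here, along with the correct interpretation of the root-set $P^{1/3}_*$ when $P_*$ is not itself radical, is where I expect the real technical work of the proof to lie.
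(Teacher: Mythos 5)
Your opening computation is sound and, if anything, more carefully justified than the paper's: showing $P_*^{1/3}Q_*^{1/5}=\sqrt{P_*}\cap\sqrt{Q_*}$ and then invoking $\sqrt{P_*}\cdot\sqrt{Q_*}\subseteq I$ with $I$ prime does prove that one of $\sqrt{P_*},\sqrt{Q_*}$ equals $I$ and is prime. But the argument is genuinely incomplete at exactly the point you flag, and your sketched repair cannot work. From $ab\in P_*$ and primality of $\sqrt{P_*}$ you obtain only $a^n\in P_*$ for some $n$, i.e.\ $a\in P_*^{1/n}$ after taking the odd root; to land back in $P_*$ you would need $P_*^{1/n}\subseteq P_*$, which is precisely the radical-closedness of $P_*$ that is not a hypothesis (and is exactly what fails in your own $(4)\subset\mathbb{Z}$ example). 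The functional calculus gives you $a=(a^n)^{1/n}\in A(X)$, but membership of $a$ in $A(X)$ was never in doubt; the issue is membership in $P_*$, and the information ``some power of $a$ lies in $P_*$'' is strictly weaker than what is needed. Passing to radicals at the start is what destroys the data required for the endgame.

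The missing idea --- and the paper's route --- is to keep the exact exponents instead of radicals. Using condition (\ref{29ordybehesht}) with the odd-root functions, set $P=P_*^{1/3}$ and $Q=Q_*^{1/5}$, and note that $h\in P_*^{1/3}$ if and only if $h^3\in P_*$, so $h\mapsto h^3$ identifies $P$ with $P_*$ exactly, with no radical loss. From $PQ$ prime one gets that $P\cap Q$ is prime (if $fg\in P\cap Q$ then $(fg)^2\in PQ$, so $f$ or $g$ lies in $PQ\subseteq P\cap Q$), hence $P\subseteq Q$ or $Q\subseteq P$, hence $P$ or $Q$ is itself prime; say $P=P_*^{1/3}$ is. Now multiplicativity of the cube root finishes in one line: $ab=t\in P_*$ gives $a^{1/3}b^{1/3}=t^{1/3}\in P$, so $a^{1/3}\in P$ or $b^{1/3}\in P$, i.e.\ $a\in P_*$ or $b\in P_*$. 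This is the step your radical-based framework cannot reproduce, because $\sqrt{P_*}$, unlike $P_*^{1/3}$, does not determine $P_*$.
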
   
\begin{proof}
Since the condition {\normalfont (\ref{29ordybehesht})} hold for  $A(X)$,  $Q=Q^\frac{1}{5}_* ,~ P=P^\frac{1}{3}_*$ are two ideals in $A(X)$. If $fg\in P\bigcap Q$, then $f^2g^2 \in PQ$. Since $PQ$ is prime, $f^2\in PQ$ or $g^2 \in PQ$ and so $f\in PQ\subseteq P\bigcap Q$ or $g\in PQ\subseteq P\bigcap Q$. Thus $P\bigcap Q$ is a prime ideal which implies $P\subseteq Q$ or $Q\subseteq P$ so $P$ or $Q$ is a prime ideal in $A(X)$. Without loss of generality we can assume that  $P=P^\frac{1}{3}_*$  is prime in $A(X)$.  Let  $a,b\in A(X)$ and $t=ab\in P_*$. Therefore, $a^\frac{1}{3}b^\frac{1}{3}=t^\frac{1}{3}\in P^\frac{1}{3}_*$ implies that $ b^\frac{1}{3}\in P^\frac{1}{3}_*$ or $a^\frac{1}{3} \in P^\frac{1}{3}_*$, i.e., $a\in P_*$ or $b\in P_*$ which completes the proof.
 \end{proof} 
 Clearly the above proposition will valid if instead of  $\frac{1}{3},\frac{1}{5}$ the fractions $\frac{1}{2k+1},\frac{1}{2n+1}$  are used, respectively where $k,n \in \mathbb{N}$.
 \begin{remark}
Similar to the proof of the above proposition one can easily seen that if the condition  {\normalfont(\ref{29ordybehesht})} holds for a subring $A(X)$  of $F(X)$, then for any two prime ideals $P$ and $Q$ in $A(X)$ we have $PQ=P\cap Q$, inparticular $P^n=P$, where $n\in \mathbb{N}$.
 \end{remark}
Let $X$ be an arbitrary topological space, the condition {\normalfont (\ref{29ordybehesht})} be true for  a subring  $A(X)$ of $F(X)$ and  $\mathbb{R}(X) \subseteq A(X)$. Then for every $f,g\in A(X)$ we have $|f-g|\in A(X)$ and consequently $f\wedge g =\frac{1}{2}(f-g-|f-g|)\in A(X)$. The following proposition is analogous to \cite[Theorem 1.6]{Gillman} and  \cite[Theorem 3.6]{Deb Ray}, and so we state it without proof.
\begin{lemma}\label{sarab}
Let    the condition {\normalfont(\ref{29ordybehesht})} hold for a subring $A(X)$ of $F(X)$ and  $\mathbb{R}(X) \subseteq A(X)$. Then $A(X)$  is a lattice ordered subring of $F(X)$ and if  $\phi : A(X) \to A(Y)$ is a ring homomorphism, then $\phi$ is a lattice  homomorphism.
\end{lemma}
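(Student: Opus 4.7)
The plan has two parts. First, to show that $A(X)$ is lattice ordered, I would essentially spell out what the paragraph preceding the lemma already indicates: the map $g:\mathbb{R}\to\mathbb{R}$ defined by $g(t)=|t|$ is continuous, so by (\ref{29ordybehesht}) we have $|f|=g\circ f\in A(X)$ for every $f\in A(X)$. The constants in $\mathbb{R}(X)\subseteq A(X)$ together with the ring operations then place the pointwise joins and meets
\[f\vee g=\tfrac{1}{2}(f+g+|f-g|),\qquad f\wedge g=\tfrac{1}{2}(f+g-|f-g|)\]
inside $A(X)$ (where in the displayed formula $g$ now stands for an arbitrary second element of $A(X)$, not for the absolute value map). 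Consequently the pointwise order makes $A(X)$ a lattice ordered subring of $F(X)$.

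For the homomorphism assertion, everything reduces to proving the single identity $\phi(|f|)=|\phi(f)|$ for every $f\in A(X)$. Once this is in hand, the displayed formulas above, together with the standard fact that a unital ring homomorphism fixes every rational constant (so in particular $\phi(\tfrac{1}{2})=\tfrac{1}{2}$), immediately yield $\phi(f\vee g)=\phi(f)\vee\phi(g)$ and the dual identity. The route I would take to $\phi(|f|)=|\phi(f)|$ is via an algebraic characterization of nonnegativity in $A(X)$: for $h\in A(X)$, one has $h(x)\geq 0$ for all $x\in X$ if and only if $h=k^2$ for some $k\in A(X)$. The ``if'' direction is immediate, and the ``only if'' direction is exactly where (\ref{29ordybehesht}) enters, applied to the continuous function $s(t)=\sqrt{\max(t,0)}$ to produce $k=s\circ h\in A(X)$.

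With that characterization in hand, since $|f|\geq 0$ I would write $|f|=u^{2}$ for some $u\in A(X)$; then $\phi(|f|)=\phi(u)^{2}$ is itself a square in $A(Y)$ and hence nonnegative. On the other hand, $\phi(|f|)^{2}=\phi(|f|^{2})=\phi(f^{2})=\phi(f)^{2}=|\phi(f)|^{2}$, so $\phi(|f|)$ and $|\phi(f)|$ are two nonnegative elements of $A(Y)\subseteq F(Y)$ whose squares coincide. Because nonnegative real numbers admit unique nonnegative square roots, pointwise evaluation forces $\phi(|f|)=|\phi(f)|$, which finishes the argument.

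The main obstacle is the algebraic characterization of nonnegativity, since it is the one step that genuinely exploits (\ref{29ordybehesht}): without square roots of nonnegative elements available inside $A(X)$, a ring homomorphism has nothing with which to grip the order structure. After that characterization is secured, the remainder is a formal calculation that parallels the arguments of Gillman \cite{Gillman} and Deb Ray \cite{Deb Ray} cited by the author.
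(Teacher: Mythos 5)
Your proposal is correct and takes essentially the approach the paper itself defers to: Lemma \ref{sarab} is stated without proof as being analogous to \cite[Theorem 1.6]{Gillman}, and that classical argument is exactly your route --- condition (\ref{29ordybehesht}) makes $|f|$ a square in $A(X)$ (via $t\mapsto\sqrt{|t|}$), so $\phi(|f|)\geq 0$, while $\phi(|f|)^{2}=\phi(f)^{2}=|\phi(f)|^{2}$ forces $\phi(|f|)=|\phi(f)|$ pointwise, after which the identities $f\vee g=\tfrac12(f+g+|f-g|)$ and $f\wedge g=\tfrac12(f+g-|f-g|)$ finish both assertions. (You even use the correct formula for $f\wedge g$, where the paper's preceding paragraph contains a sign typo.)
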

Finally, we consider  the following property of a  subring $A(X)$ of $F(X)$. \\
   Let $Y$ be a topological space and  $\phi:X\to Y$  be  a continuous mapping.
 \begin{equation}\label{24ordybehesht}
\text{If}~ f\in A(Y),  \text{ then}~ f\circ \phi \in A(X).
  \end{equation}
  \begin{remark}
  Clearly, the condition {\normalfont (\ref{24ordybehesht})} is true for $C(X)$. By {\normalfont \cite[Theorem 2.4]{Deb Ray}},   the condition {\normalfont (\ref{24ordybehesht})} is true for $B_1(X)$.
  \end{remark}
  The following example shows that if the condition {\normalfont(\ref{29ordybehesht})} is true for a subring $A(X)$ of $F(X)$,  then  the condition {\normalfont(\ref{24ordybehesht})} need not be true for it.
\begin{example}
Let $X=Y=\mathbb{R}$ and $f$ be the sign function on $\mathbb{R}$. Then $\phi:\mathbb{R} \to \mathbb{R}$ defined by $\phi(x)=sin(x)$ is a  continuous function and $f\in C(X)_F$ but $f\circ \phi \notin C(X)_F$. 
\end{example}
 
\begin{theorem}\label{25KH}
Let    the conditions {\normalfont(\ref{29ordybehesht})} and {\normalfont(\ref{24ordybehesht})} hold for a subring  $A(X)$ of $F(X)$ and  $\mathbb{R}(X) \subseteq A(X)$. Then for a continuous mapping $\psi:X\to Y$ the mapping $A(\psi): A (Y) \to A (X)$ defined by
\begin{equation*}
A(\psi)(f)=f \circ \psi ~\text{for all }~ f\in A(Y),
\end{equation*}
is a lattice ordered ring homomorphism that preserves the unity element. 
\end{theorem}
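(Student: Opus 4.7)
The plan is to verify in order: well-definedness, the ring homomorphism properties (additivity, multiplicativity, preservation of unity), and finally the lattice preservation. Since the composition $f \circ \psi$ is defined pointwise, essentially every verification reduces to a pointwise identity on $X$; the only non-trivial ingredient is having the right closure properties on $A(X)$ and $A(Y)$.

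First, by condition (\ref{24ordybehesht}) applied to $\psi$, for every $f \in A(Y)$ the function $f\circ \psi$ lies in $A(X)$, so $A(\psi)$ is a well-defined map $A(Y)\to A(X)$. For the algebraic part, I would observe that for any $f,g \in A(Y)$ and any $x\in X$, $((f+g)\circ \psi)(x) = f(\psi(x)) + g(\psi(x)) = (f\circ\psi)(x) + (g\circ\psi)(x)$, and analogously $((fg)\circ \psi)(x) = (f\circ\psi)(x)\,(g\circ\psi)(x)$. Thus $A(\psi)$ preserves addition and multiplication. Since $\mathbb{R}(X)\subseteq A(X)$ and (one assumes likewise) $\mathbb{R}(Y)\subseteq A(Y)$, the constant function $\mathbf{1}\in A(Y)$ maps to $\mathbf{1}\circ \psi = \mathbf{1}\in A(X)$, giving preservation of the unity.

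For the lattice part, I would invoke Lemma \ref{sarab}, which guarantees that both $A(X)$ and $A(Y)$ are lattice ordered subrings of $F(X)$ and $F(Y)$ respectively, with $\wedge,\vee$ computed pointwise via the formulas of the type $f\wedge g = \tfrac{1}{2}(f+g-|f-g|)$. Then for $f,g\in A(Y)$ and $x\in X$,
\begin{equation*}
((f\vee g)\circ \psi)(x) = \max\{f(\psi(x)), g(\psi(x))\} = ((f\circ\psi)\vee (g\circ\psi))(x),
\end{equation*}
and similarly for $\wedge$. Alternatively, one can derive this purely algebraically from the already-established ring homomorphism property together with the identity $|h|\circ\psi = |h\circ\psi|$.

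There is essentially no serious obstacle here; the statement is a structural consequence of condition (\ref{24ordybehesht}) combined with the pointwise nature of the operations. The only mild subtlety is the implicit assumption that $A(Y)$ also satisfies the standing hypotheses (containing $\mathbb{R}(Y)$ and the closure conditions) so that Lemma \ref{sarab} applies on the $Y$ side as well; I would state this at the outset.
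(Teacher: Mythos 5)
Your proof is correct, and it differs from the paper's only in how the lattice part is closed out. The paper's entire proof is one line: it checks (implicitly) that $A(\psi)$ is a ring homomorphism and then invokes the \emph{second} clause of Lemma \ref{sarab}, which asserts that any ring homomorphism $\phi:A(X)\to A(Y)$ between rings of this kind is automatically a lattice homomorphism; no pointwise computation with $\vee$ or $\wedge$ is needed. You instead use Lemma \ref{sarab} only for its first clause (that $A(X)$ and $A(Y)$ carry the pointwise lattice structure) and verify preservation of $\vee$ and $\wedge$ directly via $((f\vee g)\circ\psi)(x)=\max\{f(\psi(x)),g(\psi(x))\}$; your parenthetical alternative, deriving lattice preservation from the ring homomorphism property together with $|h|\circ\psi=|h\circ\psi|$, is essentially the mechanism behind the paper's cited clause. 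Both routes are valid; yours is more self-contained (it does not rely on the unproved second half of Lemma \ref{sarab}), while the paper's is shorter. Your closing remark that the hypotheses must implicitly hold for $A(Y)$ as well (so that $A(Y)$ is itself a lattice ordered ring containing $\mathbb{R}(Y)$) is a genuine point the paper glosses over, and it is right to state it explicitly.
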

\begin{proof}
It follows from Lemma \ref{sarab} since it is easily seen that $A(\psi)$ is a ring homomorphism.
 \end{proof}
 \begin{corollary} 
Let    the conditions {\normalfont(\ref{29ordybehesht})} and {\normalfont(\ref{24ordybehesht})} hold for a subring  $A(X)$ of $F(X)$ and  $\mathbb{R}(X) \subseteq A(X)$. If $\psi:X\to Y$ is a homeomorphism between topological spaces , then there exists a 
ring isomorphism between  $A (X)$ and  $A (Y)$.
\end{corollary}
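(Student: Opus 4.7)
The plan is to obtain the desired isomorphism as the map $A(\psi)$ from Theorem \ref{25KH}, and to exhibit its inverse as $A(\psi^{-1})$. Since $\psi:X\to Y$ is a homeomorphism, the inverse $\psi^{-1}:Y\to X$ is itself a continuous mapping between topological spaces, so both $\psi$ and $\psi^{-1}$ qualify as input to the construction of Theorem \ref{25KH}. This immediately produces two lattice ordered ring homomorphisms preserving the unity,
\[
A(\psi): A(Y)\to A(X),\qquad A(\psi^{-1}): A(X)\to A(Y),
\]
both of which are well defined thanks to condition (\ref{24ordybehesht}) applied to $\psi$ and to $\psi^{-1}$ respectively.

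The next step is to verify that these two homomorphisms are mutually inverse. For any $f\in A(Y)$, a direct computation gives
\[
A(\psi^{-1})\bigl(A(\psi)(f)\bigr)=A(\psi^{-1})(f\circ \psi)=(f\circ \psi)\circ \psi^{-1}=f\circ(\psi\circ \psi^{-1})=f,
\]
since $\psi\circ \psi^{-1}=\mathrm{id}_Y$. An identical argument, using $\psi^{-1}\circ \psi=\mathrm{id}_X$, shows that $A(\psi)\circ A(\psi^{-1})$ is the identity on $A(X)$. Hence $A(\psi)$ is a bijective ring homomorphism, i.e., a ring isomorphism between $A(Y)$ and $A(X)$.

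There is essentially no obstacle here beyond confirming that the ambient hypotheses of Theorem \ref{25KH} transfer to $\psi^{-1}$; this is immediate since $\mathbb{R}(X)\subseteq A(X)$ and conditions (\ref{29ordybehesht}), (\ref{24ordybehesht}) are assumed to hold symmetrically for $A$. I would therefore present the argument as a two-line deduction from Theorem \ref{25KH} together with the functorial identities $A(\psi^{-1})\circ A(\psi)=A(\psi\circ\psi^{-1})=A(\mathrm{id}_Y)=\mathrm{id}_{A(Y)}$ and similarly on the other side, after noting once that $A(\mathrm{id}_X)$ is the identity of $A(X)$ because $f\circ \mathrm{id}_X=f$ for every $f\in A(X)$.
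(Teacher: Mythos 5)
Your proof is correct and is exactly the argument the paper intends (the corollary is stated as an immediate consequence of Theorem \ref{25KH}): apply the construction to both $\psi$ and $\psi^{-1}$ and use the contravariant functorial identities to see that $A(\psi)$ and $A(\psi^{-1})$ are mutually inverse ring homomorphisms. Your explicit remark that condition (\ref{24ordybehesht}) must be invoked for $\psi^{-1}:Y\to X$ as well, i.e.\ that the hypothesis is read symmetrically in $X$ and $Y$, is the same implicit assumption the paper makes when treating $A$ as a functor, so nothing is missing.
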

 \begin{theorem}
Let    the conditions {\normalfont(\ref{29ordybehesht})} and {\normalfont(\ref{24ordybehesht})} hold for a subring  $A(X)$ of $F(X)$ and  $\mathbb{R}(X) \subseteq A(X)$. Then $A:X\to A(X), ~ \psi \to A(\psi)$ is the contravariant functor of the category of all topological spaces and the continuous mappings into the category of commutative   lattice ordered rings with  identity with  lattice ordered ring homomorphisms that preserve the unity element as its set of morphisms.
 \end{theorem}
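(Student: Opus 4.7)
The plan is to verify the four axioms that define a contravariant functor, most of which have already been established in the preceding results, so the bulk of the work is a clean bookkeeping of what remains.

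First I would check that the assignment on objects is well defined, i.e.\ that for every topological space $X$ the ring $A(X)$ really is an object of the target category. Since $\mathbb{R}(X)\subseteq A(X)$ the constant function $\mathbf{1}$ lies in $A(X)$, and $A(X)$ is commutative because it is a subring of $F(X)$. Lemma \ref{sarab} supplies the lattice ordered structure, so $A(X)$ is a commutative lattice ordered ring with identity as required.

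Next I would verify the assignment on morphisms. Given a continuous map $\psi:X\to Y$, Theorem \ref{25KH} already tells us that $A(\psi):A(Y)\to A(X)$ defined by $A(\psi)(f)=f\circ \psi$ is a unity-preserving lattice ordered ring homomorphism, which is exactly a morphism in the target category. The only point to stress here is why $A(\psi)(f)=f\circ\psi$ actually lies in $A(X)$; this is precisely condition (\ref{24ordybehesht}) applied to $\psi$, which is a hypothesis of the theorem.

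Then I would verify the two functoriality axioms. For the identity, if $id_X:X\to X$ is the identity map, then for every $f\in A(X)$ we have $A(id_X)(f)=f\circ id_X=f$, so $A(id_X)=id_{A(X)}$. For composition, suppose $\psi:X\to Y$ and $\phi:Y\to Z$ are continuous. For every $f\in A(Z)$,
\begin{equation*}
A(\phi\circ \psi)(f)=f\circ(\phi\circ\psi)=(f\circ\phi)\circ\psi=A(\psi)\bigl(A(\phi)(f)\bigr)=\bigl(A(\psi)\circ A(\phi)\bigr)(f),
\end{equation*}
which gives the contravariant composition law $A(\phi\circ\psi)=A(\psi)\circ A(\phi)$.

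There is no real obstacle here: the genuine content (that each $A(X)$ is lattice ordered, and that $A(\psi)$ is a lattice ordered ring homomorphism preserving unity) was already packaged into Lemma \ref{sarab} and Theorem \ref{25KH}; the only new ingredient is the trivial pointwise verification of the two functoriality identities, the mildest point being to remember to invoke condition (\ref{24ordybehesht}) to ensure $f\circ\psi\in A(X)$ whenever $f\in A(Y)$ so that $A(\psi)$ has the right codomain.
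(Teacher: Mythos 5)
Your proposal is correct and follows essentially the same route as the paper: the composition identity is verified pointwise exactly as in the paper's proof, and the rest is delegated to Lemma \ref{sarab} and Theorem \ref{25KH}. You are slightly more thorough in explicitly checking the identity axiom and the well-definedness on objects, which the paper leaves implicit.
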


\begin{proof}
If $\psi : X\to Y$ and  $\phi : Y\to Z$ are   continuous mappings, then
 \begin{equation*}
 A(\phi \circ \psi)(f)=f\circ (\phi \circ \psi)=(f \circ \phi )\circ \psi=A(\psi)(f \circ \phi)=A(\psi)(A(\phi)(f))=(A(\psi)\circ A(\phi))(f).
 \end{equation*}
Thus by Theorem \ref{25KH} we are done.
\end{proof}

 Let us recall that an ideal $J$ in a lattice-ordered ring $R$ is called 
convex if $f \in R, ~g \in J$, and   $0\leq f \leq g$  imply  $f\in J$ \cite{Gillman}. If $I$ is a convex ideal in $R$, then the quotient ring $R/I$ is a partially ordered ring, where for any $r\in R$, $r+I\geq 0$ if and only if there exists $s\in R$ such that $0\leq s$ and $r+I=s+I$  \cite[Theorem 5.2]{Gillman}. 
 An ideal $J$ in  $R$ is called  absolutely convex
 if $f \in R, ~g \in J$, and   $|f| \leq |g|$ imply  $f\in J$ \cite{Gillman}.\\
By Lemma \ref{sarab}, we can state the following definition.
\begin{definition}
Let    the condition {\normalfont(\ref{29ordybehesht})} hold for a subring $A(X)$ of $F(X)$ and  $\mathbb{R}(X) \subseteq A(X)$. Then  $A(X)$    is called $P$-convex if every prime ideal in $A(X)$ is  an absolutely convex ideal.
\end{definition}

\begin{example}
It is well known that the rings $C(X)$ and $F(X)$ are $P$-convex {\normalfont \cite{Gillman}}. By {\normalfont \cite[Proposition 4.8]{GGT}} and{\normalfont Example \ref{Ex}}, the subring $C(X)_F$ of $F(X)$ is $P$-convex.
\end{example}
The condition {\normalfont(\ref{29ordybehesht})} is true for the  lattice ordered ring $B_1(X)$ by Example \ref{Ex}. The next result shows that every prime idela in $B_1(X)$ is  absolutely convex.
\begin{theorem}
 $B_1(X)$ is  a $P$-convex subring of $F(X)$.
 \end{theorem}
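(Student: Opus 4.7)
The plan is to prove that every prime ideal $P$ of $B_{1}(X)$ is absolutely convex: given $g\in P$ and $f\in B_{1}(X)$ with $|f|\leq |g|$, I would produce $h\in B_{1}(X)$ satisfying $f^{2}=g\cdot h$, so that $f^{2}\in (g)\subseteq P$ and the primeness of $P$ forces $f\in P$. The candidate is the pointwise formula
\[
h(x)=\begin{cases} f(x)^{2}/g(x), & g(x)\neq 0,\\ 0, & g(x)=0.\end{cases}
\]
The identity $gh=f^{2}$ is immediate on $\{g\neq 0\}$ by definition, and on $\{g=0\}$ because $|f|\leq |g|$ forces $f(x)=0$ there, so both sides vanish.

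The main obstacle is verifying that this $h$ actually lies in $B_{1}(X)$, i.e., that it is the pointwise limit of a sequence in $C(X)$. I would start with $(f_{n}),(g_{n})\subseteq C(X)$ converging pointwise to $f,g$, and then replace $f_{n}$ by the clipped continuous function
\[
f_{n}^{*}:=\max\!\bigl(-|g_{n}|,\,\min(f_{n},|g_{n}|)\bigr)\in C(X),
\]
which satisfies $|f_{n}^{*}|\leq |g_{n}|$ and still converges pointwise to $f$ (the hypothesis $|f|\leq |g|$ is exactly what is needed to evaluate this limit: it places $f(x)$ in $[-|g(x)|,|g(x)|]$ so the outer $\max$ and $\min$ become trivial in the limit). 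Then set
\[
h_{n}(x):=\frac{(f_{n}^{*}(x))^{2}\,g_{n}(x)}{g_{n}(x)^{2}+1/n}\in C(X).
\]
At any $x$ with $g(x)\neq 0$, continuity of the arithmetic operations gives $h_{n}(x)\to f(x)^{2}g(x)/g(x)^{2}=h(x)$. At any $x$ with $g(x)=0$, the chain of inequalities
\[
|h_{n}(x)|\leq \frac{g_{n}(x)^{2}\,|g_{n}(x)|}{g_{n}(x)^{2}+1/n}\leq |g_{n}(x)|\longrightarrow 0 = h(x)
\]
closes the pointwise convergence. Therefore $h\in B_{1}(X)$, $f^{2}=gh\in P$, and primeness yields $f\in P$.

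The single delicate point is the behaviour of $h_{n}$ at the zeros of $g$, where the denominator $g_{n}^{2}+1/n$ becomes arbitrarily small. The clipping step is designed precisely so that the numerator satisfies $(f_{n}^{*})^{2}\leq g_{n}^{2}$, which absorbs the singular denominator and leaves the harmless upper bound $|g_{n}|\to 0$. Without this clipping an arbitrary continuous approximation of $f$ need not produce a sequence converging to $h$ on $Z(g)$, and one would be unable to conclude $h\in B_{1}(X)$ directly, since $B_{1}(X)$ is not closed under pointwise limits of its own elements.
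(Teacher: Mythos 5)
Your proof is correct and follows essentially the same route as the paper's: both exhibit $f^{2}$ as $g$ (resp.\ $|g|$) times an explicitly constructed Baire-one quotient, then invoke primeness of $P$. The only difference is technical --- the paper divides by $|f_n|\vee|g_n|$ and handles the resulting removable singularity directly, whereas you regularize the denominator with $+1/n$ and clip the numerator; both devices are valid.
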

 \begin{proof} 
 Let $P$ be an arbitrary prime ideal in $B_1(X)$ and $|f|\leq |g|$, where $g\in P$ and $ f\in B_1(X)$. Then $|g|\in P$ and  there exist two sequences  $\{f_n \}$ and  $\{g_n \}$ in $C(X)$ such that,  $\{f_n \}$ converges pointwise to $f$ and
 $\{g_n \}$ converges pointwise to $g$ on $X$. For any $n\in \mathbb{N}$, consider $h_n=|f_n|\vee| g _n|$ and $k_n : X\to \mathbb{R}$ defined by

 $$
k_n(x)= \left\{
\begin{array}{lc}
0						& x\in Z(h_n)\\
f^2_n(x) / h_n(x)  &  x\in X\setminus Z(h_n)
 \end{array}
\right.
$$
Since  $|f_n(x)|/ |h_n(x)|$ is bounded on $X\setminus Z(h_n)$, we observe that $k_n$ is  continuous. 
Clearly,  $\{h_n \}$ converges pointwise to $|g|$ and  $\{k_n \}$ converges pointwise to $k: X\to  \mathbb{R}$ defined by
 $$
k(x)= \left\{
\begin{array}{lc}
0						& x\in Z(g)\\
f^2(x) / |g(x)|  &  x\in X\setminus Z(g)
 \end{array}
\right.
$$
Clearly, $f^2=|g|k \in P$ and so $f\in P$ which completes the proof.
\end{proof} 

\begin{lemma}\label{30k}
Let $A(X)$ be a  $P$-convex subring of $F(X)$ and $I$ be an intersecton of a collection of prime ideals in $A(X)$. Then, $I$ is an absolutely convex ideal in $A(X)$.
\end{lemma}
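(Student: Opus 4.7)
The plan is to deduce absolute convexity of the intersection directly from absolute convexity of each prime in the intersection, with essentially no computation required. Write $I=\bigcap_{\alpha\in\Lambda}P_{\alpha}$ where every $P_{\alpha}$ is a prime ideal of $A(X)$. An arbitrary intersection of ideals is an ideal, so the only thing to check is the order-theoretic condition defining absolute convexity. Note also that, by Lemma \ref{sarab}, the hypothesis that $A(X)$ is $P$-convex implicitly places us in a lattice-ordered ring, so the expression $|f|\le |g|$ makes sense inside $A(X)$.

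The key step is the following forward chain. Take $f\in A(X)$ and $g\in I$ with $|f|\le |g|$; I aim to show $f\in I$. For each $\alpha\in\Lambda$, since $g\in I\subseteq P_{\alpha}$ and $A(X)$ is $P$-convex, the ideal $P_{\alpha}$ is absolutely convex, and hence from $|f|\le |g|$ we obtain $f\in P_{\alpha}$. Because this holds for every $\alpha$, we conclude $f\in\bigcap_{\alpha}P_{\alpha}=I$, which is exactly what absolute convexity of $I$ demands.

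I do not anticipate a genuine obstacle: the statement is essentially a bookkeeping observation that absolute convexity, being defined by a universally quantified implication, is preserved under arbitrary intersections. The only content lies in invoking $P$-convexity once per $\alpha$ to convert a membership statement about $g$ into the same statement about $f$.
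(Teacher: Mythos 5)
Your proof is correct and follows essentially the same route as the paper: the paper invokes $P$-convexity to get that each prime in the intersection is absolutely convex and then cites the standard fact (Gillman--Jerison, 5B1) that an arbitrary intersection of absolutely convex ideals is absolutely convex, which is exactly the elementwise argument you spell out.
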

\begin{proof}
Since $A(X)$ is a  $P$-convex subring of $F(X)$, every prime ideal in $A(X)$ is absolutely convex. It is well known that in any lattice ordered ring, an arbitrary intersection of absolutely convex ideals is 
 absolutely convex \cite[5B1]{Gillman}. Thus we are done.
\end{proof}
Let  $I$ be an ideal in a commutative ring $R$ with unity and $ \sqrt{I}$ denote the set of all elements of which some power belongs to $I$. Then $I\subseteq  \sqrt{I}$ and $ \sqrt{I}$ is the intersection of all prime ideals containing $I$ \cite[0.18]{Gillman}.
\begin{theorem}\label{siminar11}
Let the condition {\normalfont(\ref{mahmood})} be true  for a subring  $A(X)$ of $F(X)$ and $\mathbb{R}(X)\subseteq A(X)$. Then a $z$-ideal $I$ in $A(X)$ is an intersection of a collection of prime ideals in $A(X)$. 
\end{theorem}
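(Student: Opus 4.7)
The plan is to reduce the claim to the single identity $\sqrt{I}=I$ for every $z$-ideal $I$. Indeed, the paragraph immediately preceding the statement recalls the standard fact (Gillman, 0.18) that in any commutative ring with unity, $\sqrt{I}$ equals the intersection of all prime ideals containing $I$. So once $\sqrt{I}=I$ is in hand, $I$ is realized as the intersection of the collection
\begin{equation*}
\{P \subseteq A(X) : P \text{ prime},\ I \subseteq P\},
\end{equation*}
which is precisely the conclusion wanted.

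The only nontrivial step is the inclusion $\sqrt{I}\subseteq I$. I would argue as follows: let $f\in \sqrt{I}$, so $f^{n}\in I$ for some $n\in\mathbb{N}$. By property (i) from the list at the start of the paper, $Z(f^{n})=Z(f)$, and therefore $Z(f)\in Z[I]$, i.e.\ $f\in Z^{-1}[Z[I]]$. The hypotheses of the present theorem, namely condition (\ref{mahmood}) together with $\mathbb{R}(X)\subseteq A(X)$, are exactly those of Proposition \ref{zideal}, so its characterization (3) applies to the $z$-ideal $I$ to give $Z^{-1}[Z[I]]=I$, whence $f\in I$. The reverse inclusion $I\subseteq \sqrt{I}$ is automatic since $A(X)$ has a unit.

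I do not anticipate a real obstacle here; the argument is a straightforward composition of property (i) with the topological description of $z$-ideals already established in Proposition \ref{zideal}. The one point worth flagging in the write-up is that the two conditions used to invoke Proposition \ref{zideal} are exactly the standing hypotheses of the theorem, so no extra assumptions are needed and the conclusion is valid in the full generality stated.
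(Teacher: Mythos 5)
Your proof is correct and is essentially identical to the paper's: both establish $\sqrt{I}\subseteq I$ by noting $f^{n}\in I$ gives $Z(f)=Z(f^{n})\in Z[I]$, then invoke Proposition \ref{zideal} to conclude $f\in Z^{-1}[Z[I]]=I$, and finish with the fact that $\sqrt{I}$ is the intersection of the prime ideals containing $I$.
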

\begin{proof}
If $f\in \sqrt{I}$, then $f^n \in I$ for some $n\in \mathbb{N}$. From  $Z(f)=Z(f^n)\in Z[I]$ and  Proposition \ref{zideal} it follows that  $f\in Z^{-1}[Z[I]]=I$. Thus $\sqrt{I} \subseteq  I$ which by the above comment completes the proof.
\end{proof}
\begin{remark}
Let $A(X)$ be a  $P$-convex subring of $F(X)$. Then every maximal ideal in $A(X)$ is an absolutely convex ideal. Moreover, let the condition {\normalfont(\ref{mahmood})} be true  for  $A(X)$. Then every  $z$-ideal in $A(X)$  is an absolutely convex ideal 
by {\normalfont Lemma \ref{30k}}  and  {\normalfont Theorem \ref{siminar11}}.
\end{remark}

\begin{theorem}\label{31kh}
Let $A(X)$ be a $P$-convex subring of $F(X)$, $I$ be an intersecton of a collection of prime ideals in $A(X)$, $f\in A(X)$ and the condition {\normalfont (\ref{mahmood})} be true for $A(X)$. Then $ I+f \geq 0 $ implies that there exists $ Z \in Z[I]$ such that  $0 \leq f  $ on $Z$.
\end{theorem}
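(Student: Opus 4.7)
The plan is to unfold the definition of the quotient ordering on $A(X)/I$, exploit absolute convexity of $I$ (a consequence of $P$-convexity via Lemma \ref{30k}), and then take $Z$ to be the zero-set of the negative part of $f$. First, I would observe that since $I$ is an intersection of prime ideals in the $P$-convex ring $A(X)$, Lemma \ref{30k} makes $I$ absolutely convex (in particular, convex), so the quotient $A(X)/I$ is a partially ordered ring in the sense recalled just before the statement. The hypothesis $I+f\geq 0$ then unpacks as: there exists $s\in A(X)$ with $s\geq 0$ pointwise and $f-s\in I$.

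Next, I would pass to the negative part $f^{-}:=(-f)\vee 0$, which lies in $A(X)$ because $A(X)$ is lattice-ordered by Lemma \ref{sarab} (applicable since $P$-convexity presupposes condition (\ref{29ordybehesht}), and we also have $\mathbb{R}(X)\subseteq A(X)$). The key pointwise inequality is
\[
-f \;=\; -s-(f-s) \;\leq\; |f-s|,
\]
which holds because $-s\leq 0$ together with $-(f-s)\leq |f-s|$. Combined with $0\leq |f-s|$, this yields $f^{-}\leq |f-s|$, hence $|f^{-}|=f^{-}\leq |f-s|$.

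Since $f-s\in I$ and $I$ is absolutely convex, we first get $|f-s|\in I$, and then a second application of absolute convexity (using the estimate $|f^{-}|\leq |f-s|$) forces $f^{-}\in I$. I would then take $Z:=Z(f^{-})\in Z[I]$ as the desired zero-set: for every $x\in Z$ one has $(-f(x))\vee 0=0$, i.e., $f(x)\geq 0$, which is exactly the conclusion.

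The one place that needs a little care is the pointwise sign-chasing leading to $-f\leq |f-s|$; once that inequality is in hand, the two successive uses of absolute convexity do essentially all the work. The hypothesis (\ref{mahmood}) does not appear to enter the argument directly; it seems to be a running assumption of the section that keeps this statement in line with its neighbors (and guarantees, via Proposition \ref{zideal}, that there is a rich supply of $z$-ideals and prime ideals to which Lemma \ref{30k} can be applied).
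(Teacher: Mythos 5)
Your proof is correct and is essentially the paper's argument in disguise. The paper applies Lemma \ref{30k} to get absolute convexity of $I$, then cites Gillman--Jerison (Theorem 5.3(5)) to conclude $I+f=I+|f|$, hence $f-|f|\in I$, and takes $Z=Z(f-|f|)$. Since $f-|f|=-2f^{-}$, your zero-set $Z(f^{-})$ is literally the same set, and your pointwise computation $-f\le |f-s|$ followed by two uses of absolute convexity is just an inline proof of the cited Gillman--Jerison fact; the sign-chasing is correct. The only substantive divergence is your closing remark that condition (\ref{mahmood}) plays no role: the paper invokes it at the end to note that $Z\neq\emptyset$ (the element $f-|f|$ lies in the proper ideal $I$, hence is not a unit, hence has nonempty zero-set). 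As literally stated the conclusion ``$0\le f$ on $Z$'' is vacuously true for $Z=\emptyset$, so your proof establishes the stated claim, but the nonemptiness of $Z$ is what makes the theorem nontrivial and is presumably why (\ref{mahmood}) is in the hypotheses; you should add that one line rather than dismiss the assumption.
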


\begin{proof}
By Lemma \ref{30k}, $I$ is an absolutely convex ideal in $A(X)$. Thus by \cite[Theorem 5.3(5)]{Gillman}, $ I+f =I+|f|$ since  $0\leq I+f \in A(X)/I$. Thus, $f-|f|\in I$ and so $Z=Z(f-|f|)\in Z[I]$ and $Z\neq \emptyset$ by the condition {\normalfont (\ref{mahmood})}. Clearly, $f_{|_{Z}}=|f|_{|_{Z}}\geq 0$.
\end{proof} 
In the following theorem, it is shown that if $I$ is a $z$-ideal in an especial subrings of $F(X)$, then the converse of Theorem \ref{31kh} holds.
\begin{theorem}\label{emam}
Let $A(X)$ be a $P$-convex subring of $F(X)$, $I$ be a $z$-ideal in $A(X)$, $f \in A(X)$ and  the condition {\normalfont (\ref{mahmood})} be true for $A(X)$. If there exists $Z\in Z[I]$ such that $f \geq 0$ on $Z$, then $0\leq I+f \in A(X)/I$.
\end{theorem}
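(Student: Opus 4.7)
The plan is to exhibit an explicit nonnegative representative of the coset $f+I$, namely the positive part $f^{+}=f\vee 0$, and then use the $z$-ideal hypothesis to show $f-f^{+}=-f^{-}\in I$.

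First I would check that all the background structure is available. Since $A(X)$ is $P$-convex, it satisfies condition (\ref{29ordybehesht}) and contains $\mathbb{R}(X)$, so by Lemma \ref{sarab} it is a lattice-ordered subring of $F(X)$; in particular $f^{+}=f\vee 0$ and $f^{-}=(-f)\vee 0$ lie in $A(X)$ and $f=f^{+}-f^{-}$. Next, by the Remark following Theorem \ref{siminar11}, every $z$-ideal in $A(X)$ is absolutely convex (using condition (\ref{mahmood})), so $I$ is convex and $A(X)/I$ inherits a partial order where $I+h\geq 0$ exactly when some $s\in A(X)$ with $s\geq 0$ satisfies $h-s\in I$.

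The heart of the argument is to show $f^{-}\in I$. By hypothesis there is $g\in I$ with $Z(g)=Z$, and on $Z$ we have $f\geq 0$, hence $f^{-}=(-f)\vee 0=0$ on $Z$. Therefore $Z(g)=Z\subseteq Z(f^{-})$. Since $I$ is a $z$-ideal, the characterization in Proposition \ref{zideal} (equivalent form: $Z(g)\subseteq Z(h)$ with $g\in I$ forces $h\in I$) gives $f^{-}\in I$. Consequently $f-f^{+}=-f^{-}\in I$, so $I+f=I+f^{+}$ with $f^{+}\geq 0$, yielding $0\leq I+f$ in $A(X)/I$.

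The only subtle point I expect is invoking the correct equivalent form of the $z$-ideal condition from Proposition \ref{zideal} (the statement as written has an apparent typo, but the intended content $Z^{-1}[Z[I]]=I$ is exactly what produces $f^{-}\in I$ from $Z(g)\subseteq Z(f^{-})$). Everything else is a routine assembly of the lattice structure from Lemma \ref{sarab}, the convexity of $z$-ideals from the Remark, and the definition of the order on the quotient.
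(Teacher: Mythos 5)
Your proof is correct and follows essentially the same route as the paper's: the paper shows $Z\subseteq Z(f-|f|)$ and uses the $z$-ideal property to get $f-|f|\in I$, then takes $|f|$ as the nonnegative representative, whereas you show $f^{-}\in I$ and take $f^{+}$; since $f-|f|=-2f^{-}$ these are the same argument with a different (equivalent) choice of representative.
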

\begin{proof}
  Since $ f = | f|$ on $Z$, we have $ Z \subseteq 
Z(f - | f|) $ and so $ Z(f - | f|) \in  Z[I]$.  By Proposition \ref{zideal}, $f - | f| \in I$, i.e.,
 $I+f = I+| f|$. Thus by \cite[Theorem 5.2]{Gillman},  $I+f \geq 0$ since $0\leq | f| $.
 \end{proof}
\begin{theorem}
  Let $A(X)$ be a $P$-convex subring of $F(X)$, the condition {\normalfont ( \ref{mahmood})} hold for $A(X)$, $I$ be a proper $z$-ideal in $A(X)$ and $f \in A(X)$. If there exists  $Z \in Z[I]$
such that $f_{|_{Z}}>0$, then  $I+f > 0$.
 \end{theorem}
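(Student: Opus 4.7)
The plan is to reduce the strict inequality to two ingredients: the nonstrict inequality, which is exactly Theorem \ref{emam}, and the nonvanishing of the class $I+f$ in $A(X)/I$, which will follow from a short filter argument. Note that since $I$ is a $z$-ideal in a $P$-convex subring for which condition (\ref{mahmood}) holds, the Remark preceding Theorem \ref{31kh} guarantees that $I$ is absolutely convex, so $A(X)/I$ is genuinely a partially ordered ring and the symbol $I+f > 0$ means $I+f\geq 0$ together with $I+f\neq 0$, i.e.\ $f\notin I$.

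First I would invoke Theorem \ref{emam} verbatim: the hypothesis that $f\geq 0$ on some $Z\in Z[I]$ (which is stronger here, since we even have $f_{|_{Z}}>0$) yields immediately that $I+f\geq 0$ in $A(X)/I$. This disposes of the nonstrict half without any further work.

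Second, I would show $f\notin I$ by contradiction. Assume $f\in I$; then $Z(f)\in Z[I]$. Because $I$ is proper and condition (\ref{mahmood}) is in force, Proposition \ref{KHorshidKhordad}(1) says $Z[I]$ is a $Z_{A(X)}$-filter, hence closed under finite intersections and excluding $\emptyset$. Thus $Z\cap Z(f)\in Z[I]$ and so $Z\cap Z(f)\neq\emptyset$. But $f_{|_{Z}}>0$ means exactly $Z\cap Z(f)=\emptyset$, a contradiction. Therefore $f\notin I$, and combining with the first step gives $I+f>0$.

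The argument has no real obstacle; the only point that could invite a slip is making sure that the strict positivity $f_{|_{Z}}>0$ is used in precisely the right place, namely to force $Z\cap Z(f)=\emptyset$ and collide with the filter property of $Z[I]$. Everything else is an appeal to already-established machinery (Theorem \ref{emam} and Proposition \ref{KHorshidKhordad}(1)), and condition (\ref{mahmood}) enters exactly once, through its role in guaranteeing that $Z[I]$ is a proper filter.
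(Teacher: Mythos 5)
Your proposal is correct and follows essentially the same route as the paper: Theorem \ref{emam} supplies $I+f\geq 0$, and the filter property of $Z[I]$ together with $Z\cap Z(f)=\emptyset$ forces $f\notin I$, hence $I+f>0$. The only cosmetic difference is that you run the second step as an explicit contradiction via closure of $Z[I]$ under intersections, whereas the paper states the same facts directly (and also cites Proposition \ref{zideal}, which is not actually needed for the trivial direction $f\in I\Rightarrow Z(f)\in Z[I]$).
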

\begin{proof}
 Since $ Z \bigcap Z(f) =\emptyset$,  $Z \in  Z[I]$ and by  the condition ( \ref{mahmood})  $\emptyset \notin Z[I]$, we have $Z(f)\notin Z[I]$. Since   $I$ is a $z$-ideal in $A(X)$ and by Proposition \ref{zideal}, we have $f\notin I$. Thus $I+f > 0$ since by  Theorem \ref{emam},  $ I+f \geq 0$.
 \end{proof}  
\begin{theorem}
  Suppose that  the condition {\normalfont ( \ref{mahmood})} is true for a  $P$-convex subring $A(X)$  of $F(X)$,   $M_i$ is a maximal ideal in $A(X)$ for  $1\leq i\leq n$  and $f \in A(X)$.  If $0<f+M_i$ for every $1\leq i\leq n$ , then there exists $Z\in Z[\bigcap_{i=1}^{i=n}M_i]$ such that $f$ is positive on $Z$.
 \end{theorem}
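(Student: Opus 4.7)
The plan is to reduce strict positivity in each quotient $A(X)/M_i$ to a uniform condition on a single zero-set in $Z[I]$, where $I=\bigcap_{i=1}^{n} M_i$. First I note that $I$ is a proper $z$-ideal: each $M_i$ is a maximal ideal in a ring satisfying (\ref{mahmood}), hence equals $Z^{-1}[Z[M_i]]$ by Proposition \ref{KHorshidKhordad}(1)--(2), so each $M_i$ is a $z$-ideal; an intersection of $z$-ideals is a $z$-ideal, and $I$ is proper because each $M_i$ is.

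From the weaker inequality $0 \leq f+M_i$ and Theorem \ref{31kh} applied to each $M_i$ separately, I obtain $f-|f| \in M_i$ for every $i$, hence $f-|f|\in I$. Thus, setting $g:=f\vee 0=\tfrac{1}{2}(f+|f|)$ — which lies in $A(X)$ by Lemma \ref{sarab} since $\mathbb{R}(X)\subseteq A(X)$ and (\ref{29ordybehesht}) is in force for $P$-convex rings — I have $g-f\in I$. In particular $g\equiv f\pmod{M_i}$, and since $f\notin M_i$ (from the strict inequality), also $g\notin M_i$ for every $i$. Moreover $g\geq 0$ everywhere, and $\{g>0\}=\{f>0\}$.

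Now comes the key step: for each $i$, because $M_i$ is a $z$-ideal and $g\notin M_i$, the zero-set $Z(g)$ cannot lie in $Z[M_i]$; otherwise $Z(g)=Z(m)$ for some $m\in M_i$ would force $g\in M_i$ by the $z$-ideal property (Proposition \ref{zideal}). By the contrapositive of Proposition \ref{KHorshidKhordad}(3), $Z(g)$ fails to meet some $Z(h_i)\in Z[M_i]$; equivalently, there exists $h_i\in M_i$ with $Z(h_i)\cap Z(g)=\emptyset$, so $g>0$ on $Z(h_i)$. Finally, let $h=\prod_{i=1}^{n} h_i$. Each factor $h_i$ lies in $M_i$, and since $M_j$ is an ideal containing $h_j$, the product $h$ lies in every $M_i$, hence $h\in I$. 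Because $Z(h)=\bigcup_{i=1}^{n} Z(h_i)$ and $f=g>0$ on each $Z(h_i)$, we conclude $Z:=Z(h)\in Z[I]$ with $f$ positive on $Z$.

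The step I expect to require the most care is the passage from ``strict positivity modulo $M_i$'' to the disjointness $Z(h_i)\cap Z(g)=\emptyset$; this is where the $z$-ideal characterization and Proposition \ref{KHorshidKhordad}(3) are essential, and the product trick $h=\prod h_i$ is what transfers the individual disjointness statements to a single membership $h\in I$. Everything else is routine lattice arithmetic guaranteed by Lemma \ref{sarab} and the absolute convexity supplied by Lemma \ref{30k}.
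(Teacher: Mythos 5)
Your proof is correct and follows essentially the same route as the paper's: both obtain, for each $i$, a zero-set in $Z[M_i]$ on which $f$ is strictly positive (using Theorem \ref{31kh} together with the contrapositive of Proposition \ref{KHorshidKhordad}(3)) and then pass to the union, which is the zero-set of the product of the witnesses and hence lies in $Z[\bigcap_{i=1}^{n}M_i]$. The only cosmetic difference is that you replace $f$ by $f\vee 0$ before running the disjointness argument, whereas the paper instead intersects the zero-set on which $f\geq 0$ with a zero-set disjoint from $Z(f)$.
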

 \begin{proof}
 For any $i$, $0<f+M_i$ implies that $f\notin M_i$ and  so there exists $g_i \in M_i$ such that $Z(f)\bigcap Z(g_i)=\emptyset $ by Proposition \ref{KHorshidKhordad}.  By Theorem \ref{31kh},  there exists $Z_i\in Z[M_i]$ such that $f_{|_{Z_i}}\geq 0$. If $Z^{'}_i=Z_i \cap Z(g_i) $, then   $Z^{'}_i\in Z[M_i]$.  Clearly, $Z=\bigcup_{i=1}^{i=n}Z^{'}_i \in Z[\cap_{i=1}^{i=n}M_i]$ and $f$ is positive on $Z$. 
 \end{proof}
 %
 %

\end{document}